\documentclass{article}
\usepackage[a4paper]{geometry}
\usepackage[pdftex]{graphicx}
\usepackage{amsmath}
\usepackage{amssymb}
\usepackage{amsthm}
\usepackage{empheq}
\usepackage{array}
\usepackage{longtable}
\usepackage{adjustbox}
\usepackage{multirow}
\usepackage{float}
\usepackage{xcolor}
\usepackage{tikz}
\usetikzlibrary{shapes,patterns}
\usetikzlibrary{topaths,calc}
\usetikzlibrary{decorations.pathmorphing}
\newtheorem{thm}{Theorem}

\newtheorem{conjecture}{Conjecture}

\newtheorem{lemme}{Lemma}
\usepackage{pgfplots}
\pgfplotsset{compat=1.18}
\usepackage[ruled,vlined]{algorithm2e}
\usepackage[square,numbers]{natbib}
\usepackage{url}
\usepackage{doi}
\usepackage{appendix}
\usepackage{comment}
\title{\textbf{Constructing two completely independent spanning trees in the dual-cube}}
\author{Mohammed Lalou, Nader Mbarek, Abdallah Skender, Olivier Togni\\
        Université Bourgogne Europe, LIB UR 7534, Dijon}
\begin{document}
\maketitle
\begin{abstract}
In this paper, we prove the existence of two completely independent spanning trees in the $n$-dimensional dual-cube $F_n$, a variant of the hypercube, for every $n \geq 5$. To this end, we use the hypercube structure of the clusters of $F_n$ to extend the construction of CIST from the $(n-1)$-dimensional hypercube to the dual-cube. In addition, we propose a recursive algorithm that builds the two trees while improving their diameters. Finally, we propose a conjecture concerning the existence of $k$ completely independent spanning trees in the dual-cube.

\end{abstract}

\section{Introduction}
\label{sec1}
Let $G = (V,E)$ be a simple undirected graph with vertex set $V(G)$ and edge set $E(G)$. The \textit{degree} of a vertex $x$ in $G$ is the number of vertices adjacent to it and is denoted $d_G(x)$. Let $x,y$ be two vertices of $G$. A $(x,y)$-\textit{path} in $G$ is a sequence of vertices starting with $x$ and ending with $y$ such that each consecutive pair is an edge of $G$; it is a \textit{cycle} if $x = y$. A graph is \textit{connected} if there is a path between any pair of its vertices; it is $k$-\textit{connected} if it remains connected even after deleting any set of $k-1$ vertices. The \textit{distance} between $x$ and $y$ in $G$, denoted by $d_G(x,y)$, is the length of the shortest $(x,y)$-path in $G$. The \textit{eccentricity} of a vertex $x$ in $G$, denoted by $e_G(x)$, is the maximum distance from $x$ to any other vertex of $G$. The \textit{diameter} of $G$, denoted by $\operatorname{diam}(G)$, is the maximum distance between any two vertices of $G$, equivalently, $\operatorname{diam}(G) = \max_{x \in V(G)} e_G(x)$. The \textit{radius} of $G$, denoted by $r(G)$, is the minimum eccentricity among all vertices of $G$. A vertex $x$ is called a \textit{center} of $G$ and denoted $c(G)$ if $e_G(x) = r(G)$. A graph can have multiple centers; however, a tree has exactly one center if its diameter is even, and exactly two adjacent centers otherwise. A graph is \textit{acyclic} if it does not contain any cycles. A \textit{tree} $T$ is a connected acyclic graph; it is a \textit{spanning tree} of $G$ if $V(T) = V(G)$ and $E(T) \subseteq E(G)$. A vertex $x$ is an \textit{internal} vertex of $T$ if $d_T(x) \geq 2$; otherwise, it is a \textit{leaf}.

A \textit{Hamiltonian} path of $G$ is a path that visits every vertex of $G$ exactly once. A Hamiltonian graph is a graph that contains a Hamiltonian cycle. Let $G$ be a graph and $P_1,P_2$ two $(x,y)$-paths. $P_1$ and $P_2$ are \textit{internally-disjoint} or vertex-disjoint paths if they do not share an internal vertex in common; they are \textit{openly-disjoint} if they are both internally-disjoint and edge-disjoint. Let $k \geq 2$ be an integer and $T_1,T_2,\dots,T_k$ be $k$ spanning trees of $G$. If for any pair of vertices $x,y$, the $(x,y)$-paths in $T_1,T_2,\dots,T_k$ are openly-disjoint, then $T_1,T_2,\dots,T_k$ are called \textit{completely independent spanning trees} (CIST for short) of $G$.

The study of CIST was originally motivated by applications in fault-tolerant communications and started with the work of Hasunuma \cite{2}, where he introduced CIST and gave an essential characterization:
\begin{thm}[\cite{2}]
    Let $T_1, \dots, T_k$ be spanning trees in a graph $G$. Then, $T_1, \dots , T_k$ are completely independent if and only if $T_1, \dots , T_k$ are edge-disjoint and for any vertex $x \in V(G)$ and $1 \leq i \leq k$, there is at most one spanning tree $T_i$ such that $d_{T_i}(x) > 1.$
    \label{thm00}
\end{thm}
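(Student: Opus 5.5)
We prove the two directions of Theorem~\ref{thm00} separately. Throughout, $P_i(x,y)$ denotes the unique $(x,y)$-path in $T_i$.

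\emph{Necessity.} Assume $T_1,\dots,T_k$ are completely independent.

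For edge-disjointness, take an edge $xy$ lying in two trees $T_i$ and $T_j$. Then $P_i(x,y)=P_j(x,y)=xy$, so these two paths share an edge and are not openly-disjoint. This is a contradiction.

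For the degree condition, suppose some vertex $x$ has $d_{T_i}(x)\ge 2$ and $d_{T_j}(x)\ge 2$ with $i\neq j$. Let $A$ be the vertex set of a component of $T_i-x$ that avoids $x$'s $T_j$-parent structure. Concretely, proceed as follows.
\begin{itemize}
\item Root $T_j$ at $x$ and pick two distinct $T_j$-neighbours $u,v$ of $x$. Then $P_j(u,v)=u\,x\,v$, which has $x$ as an internal vertex.
\item In $T_i$, the path $P_i(u,v)$ must avoid $x$ internally, which forces $u$ and $v$ into the same component of $T_i-x$.
\item Since $d_{T_i}(x)\ge 2$, the forest $T_i-x$ has at least two components. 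Hence its components partition $V\setminus\{x\}$ into at least two nonempty classes.
\item Every pair of $T_j$-neighbours of $x$ lies in one class, so all $T_j$-neighbours of $x$ lie in a single class $C$.
\item Pick $w$ in a different class $C'$. Let $y$ be the $T_j$-neighbour of $x$ on $P_j(x,w)$, so $y\in C$. By symmetry, the $T_i$-neighbours of $x$ all lie in a single component $D$ of $T_j-x$.
\item Choose $a$ a $T_j$-neighbour of $x$ and $b$ a $T_i$-neighbour of $x$ in the component of $T_i-x$ different from $C$; this is possible because $x$ has $T_i$-neighbours in every component of $T_i-x$.
\item Then $P_i(a,b)$ passes through $x$, since $a$ and $b$ lie in different components of $T_i-x$. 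Hence $P_j(a,b)$ must avoid $x$, so $b$ is in the component of $T_j-x$ containing $a$.
\item Varying $a$ over the $T_j$-neighbours of $x$, which lie in distinct components of $T_j-x$, gives a contradiction once there are two of them, and there are, since $d_{T_j}(x)\ge2$.
\end{itemize}
This bookkeeping of components of $T_i-x$ and $T_j-x$ is the main obstacle, and I will write it cleanly.

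\emph{Sufficiency.} Assume the trees are edge-disjoint and every vertex is internal in at most one tree. Fix $x\neq y$ and $i\neq j$.

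Edge-disjointness of the trees gives edge-disjointness of $P_i(x,y)$ and $P_j(x,y)$.

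Now suppose $z$ is internal to both paths. Then $d_{T_i}(z)\ge2$ and $d_{T_j}(z)\ge2$, contradicting the degree hypothesis.

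Hence the paths are openly-disjoint.
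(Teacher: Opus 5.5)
The paper gives no proof of this statement. It is quoted from Hasunuma~\cite{2} and used as a black box, and only its ``if'' direction is actually invoked: at the end of the proof of Theorem~\ref{thm0}, edge-disjointness plus ``no vertex is internal in both trees'' is turned into complete independence.

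Your proof is correct and self-contained. The sufficiency direction is right, and so is the edge-disjointness half of necessity. The degree half of necessity also works, but it carries dead weight that you should delete when writing it up. The opening set $A$ and the ``parent structure'', the rooting of $T_j$, and the vertices $w$, $y$ and the component $D$ in the fifth item are never used.

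What actually does the work is the following chain.
All $T_j$-neighbours of $x$ lie in one component $C$ of $T_i-x$. Take any vertex $b\notin C\cup\{x\}$; such a vertex exists because $T_i-x$ has at least two components, and it need not be a $T_i$-neighbour of $x$. For two distinct $T_j$-neighbours $a_1,a_2$ of $x$, the paths $P_i(a_1,b)$ and $P_i(a_2,b)$ have $x$ as an internal vertex. Hence $P_j(a_1,b)$ and $P_j(a_2,b)$ avoid $x$. This puts $a_1$ and $a_2$ in the same component of $T_j-x$, contradicting $P_j(a_1,a_2)=a_1\,x\,a_2$.

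An equivalent, symmetric phrasing: the components of $T_i-x$ and of $T_j-x$ are two partitions of $V(G)\setminus\{x\}$, each with at least two blocks. Any two such partitions have a pair of vertices separated by both. The $T_i$- and $T_j$-paths between that pair then share the internal vertex $x$.
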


Hasunuma \cite{12} also proved that the problem of determining whether two completely independent spanning trees exist in a graph is NP-Complete and conjectured that any $2k$-connected graph contains $k$ CIST. However, this conjecture was later disproved \cite{4}. Following this result, several sufficient conditions derived from Hamiltonicity conditions have been studied to establish the existence of CIST \cite{5,40,22,73}. For more details about the completely independent spanning trees problem, readers may refer to \cite{82}.

In this paper, we prove that the dual-cube $F_n$, a hypercube variant introduced in $2001$ by Li and Peng \cite{dual}, admits two completely independent spanning trees for every $n \geq 5$, and provide a recursive algorithm to construct them. Moreover, the constructed trees have respective diameters of $29$ and $31$ for $n = 5$ and $5n+5, 5n+7$ for $n \geq 6$. The rest of this paper is organized as follows. Section~\ref{sec2} gives the preliminaries used across this paper. Section~\ref{sec3} proves the existence of two completely independent spanning trees for every $n \geq 5$ in the dual-cube $F_n$. Section~\ref{sec4} presents a recursive algorithm that uses the results of the previous section with an additional method to reduce the diameters of the respective trees. Section~\ref{sec5} concludes the paper with some research directions.
\section{Preliminaries on hypercube's variants}
\label{sec2}
The $n$-dimensional hypercube $Q_n$ is the graph with $2^n$ vertices such that each vertex is labeled by an $n$-tuple from the set $\{ 0, 1 \}^n$; and two vertices are adjacent if and only if they differ in one coordinate. Thus $Q_n$ is $n$-regular and $n$-connected.

Over the years, several results on CIST in hypercubes have been obtained: Pai et al. \cite{6} showed that $Q_n$ does not have $\lfloor \frac{n}{2}\rfloor $ CIST when $n$ is even and is not a power of two. Pai and Chang \cite{8} proved the existence of two CIST in $Q_n$ with diameter $2n-1$ for every $n\geq 4$. This result was later extended by Shaw \cite{78}, who showed that, for any positive integer $k$, $Q_n$ contains $2^k$ CIST when $n\geq 16\cdot 2^k-1$. Shaw also proved that these trees can be constructed with diameter $2+o(1))n$ whenever $n\geq 6\cdot 2^k+k+4$. More recently, Barabde et al. \cite{79} showed that $Q_n$ does not have $\lfloor \frac{n}{2} \rfloor$ CIST when $n$ is even with $2 < n \leq 10^7$ with some exceptions. They also showed that $Q_n$ has three CIST when $n \geq 7$, of diameters at most $2n+1, 2n+3$, and $2n+4$ respectively.

The hypercube also has many variants that have been considered. Table \ref{variants} summarizes the results on locally twisted cubes, crossed cubes, and parity cubes. Other variants were also considered, such as the Möbius cubes \cite{8,70}, the augmented cubes \cite{11}, the balanced hypercubes \cite{14}, the divide-and-swap cubes \cite{51}, the shuffle cubes \cite{66,67}, and the bicubes \cite{50}. 
\begin{table}[h]
    \centering
    \begin{tabular}{|p{1.5cm}|p{12cm}|}
    \hline
    Variant & Result \\
    \hline
    $LTQ_n$ & \cite{43}: 2 CIST for $n \geq 4$.  \newline 
              \cite{8}: 2 CIST of diameter $2n - 1$ for $n\geq 4$, then \cite{26} of diameter $2n - 2$ if $n = 4$, and $2n-3$ if $n \geq 5$. \newline
              \cite{18}: 3 CIST of respective diameters $11,12$ and $12$ for $n = 6$ , and $2n -1$ when $n \geq 7$.\\
    \hline
    $CQ_n$ & \cite{8}: 2 CIST of diameter $2n - 1$ when $n \geq 4$, then \cite{42} of diameter $2n-2$ when $n \in \{ 4,5 \}$, and $2n - 3 $ when $n \geq 6$. \newline
             \cite{7} 2 CIST of diameter $n+4$ when $n \geq 4$.\newline
             \cite{18}: 3 CIST of diameter $12$ when $n = 6$, and $2n +1$ when $n \geq 7$.\\
    \hline
    $PQ_n$ & \cite{8} 2 CIST of diameter $2n-1$ when $n \geq 4$. \\ 
    \hline
    \end{tabular}
    \caption{Known results on CIST in hypercube variants}
    \label{variants}
\end{table}

However, for a given number of vertices, the hypercube requires a relatively large number of edges. In this regard, Li and Peng \cite{dual} introduced the dual-cube, which preserves a hypercube-based structure while significantly reducing the number of edges. More specifically, for a fixed dimension $n$ and a fixed number of edges per vertex $n$, the dual-cube has $2^{2n-1}$ vertices, which is four times more than the $2^n$ vertices of the hypercube. Formally, the dual-cube $F_n$ is an $n$-regular $n$-connected graph on the vertex set $\{0,1\}^{2n-1}$ such that two vertices $u = (u_{2n-1},\dots, u_1)$ and $v = (v_{2n-1},\dots, v_1)$ are adjacent in $F_n$ if and only if:
\begin{enumerate}
    \item $u$ and $v$ differ only in one bit position $i$.
    \item if $1 \leq i \leq n-1$ then $u_{2n-1} = v_{2n-1} = 0$.
    \item if $n \leq i \leq 2n-2$ then $u_{2n-1} = v_{2n-1} = 1$.
\end{enumerate}
It is useful to say that the dual-cube $F_n$ is composed of $(n-1)$-dimensional hypercubes $Q_{n-1}$ as basic components. These components are called clusters and are distributed evenly across the two classes $0$ and $1$. A cluster of class $0$ is a set of vertices $u$ of form $(0,u_{2n-2},\dots, u_n, * ,\dots, *)$ where $*$ is an arbitrary value. In the same way, a cluster of class $1$ is a set of vertices $u$ of form $(1 ,*, \dots ,* ,u_{n-1},\dots, u_1)$. Clusters of the same class cannot be connected. An edge that connects two vertices of distinct classes is called a {\em cross-edge}. This labeling of the vertices of the dual-cube implies a binary representation of a vertex $u = (u_{2n-1},\dots, u_1)$ into three parts:

\begin{table}[h]
\centering
\begin{tabular}{|c|c|c|c|}
\hline
Cluster class & $u_{2n-1}$ & $u_{2n-2} \dots u_{n}$ & $u_{n-1} \dots u_1$  \\
\hline
Class $0$ & $0$ & Node ID & Cluster ID \\
\hline
Class $1$ & $1$ & Cluster ID & Node ID \\
\hline
\end{tabular}
\label{labeling}
\end{table}
Moreover, in what follows, clusters (respectively, vertices) are labeled using the decimal representation of their cluster (respectively, vertex) IDs. By the definition of the dual-cube structure, the vertex $a$ of the cluster $b$ in class $i$ is connected to the vertex $b$ of the cluster $a$ in class $1-i$. Note also that two clusters of the same class do not have any cross-edges between them, and that each cluster $C_1$ of class $i$ has exactly one cross-edge with every cluster $C_2$ of class $1-i$.
The dual-cube $F_n$ can also be constructed recursively using four copies of $F_{n-1}$ as follows \cite{dual}: Let $F_{n-1}^i$ denote the $i$th copy of $F_{n-1}$, where $i=b_2b_1$ with $b_1,b_2\in\{0,1\}$. For $a\in\{0,1\}$, let $S_a^{b_2b_1}$ be the set of clusters of class $a$ in the copy $F_{n-1}^{b_2b_1}$. This means that, for a vertex $(a x_{n-2} \dots x_{1} y_{n-2} \dots y_1)$ in $F_{n-1}$, its vertex address in $F_n$ becomes $(a b_2 x_{n-2} \dots x_{1} b_1 y_{n-2} \dots y_1)$ with $0 \leq b_2 b_1 \leq 3$. Then, the clusters of class $0$ in $F_n$ are formed by connecting pair by pair the vertices of $S_0^{00}$ with the vertices of $S_0^{01}$, and the vertices of $S_0^{10}$ with the vertices of $S_0^{11}$. In the same way, the clusters of class $1$ in $F_n$ are formed by connecting pair by pair the vertices of $S_1^{00}$ with the vertices of $S_1^{10}$, and the vertices of $S_1^{01}$ with the vertices of $S_1^{11}$. See Figure~\ref{recursive} for an illustration of the recursive construction process.
\begin{figure}[h]
    \centering
\begin{tikzpicture}
    \node[shape=circle] (1) at (0,0) {$S_0^{00}$};
    \node[shape=circle] (2) at (2,0) {$S_1^{00}$};
    \node[shape=circle] (3) at (6,0) {$S_0^{01}$};
    \node[shape=circle] (4) at (8,0) {$S_1^{01}$};
    \node[shape=circle] (5) at (0,-3) {$S_0^{11}$};
    \node[shape=circle] (6) at (2,-3) {$S_1^{11}$};
    \node[shape=circle] (7) at (6,-3) {$S_0^{10}$};
    \node[shape=circle] (8) at (8,-3) {$S_1^{10}$};
    \node at (-1.4,0) {$F^{00}_{n-1}$};
    \node at (9.4,0) {$F^{01}_{n-1}$};
    \node at (9.4,-3) {$F^{10}_{n-1}$};
    \node at (-1.4,-3) {$F^{11}_{n-1}$};
    
    \path [-] (2) edge node {} (8);
    \path [-] (4) edge node {} (6);
    \draw[-] (1) to[out=30, in= 140] (3);
    \draw[-] (5) to[out=30, in= 140] (7);

    \begin{scope}[fill opacity=0,dashed]
    \filldraw[] ($(1)+(-0.5,0.5)$)
        to[out=0,in=-180] ($(2) + (0.5,0.5)$)    
        to[out=0,in=0] ($(2) + (0.5,-0.5)$)
        to[out=180,in=0] ($(1) + (-0.5,-0.5)$)
        to[out=180,in=180] ($(1) + (-0.5,0.5)$) ;
    \end{scope}    

    \begin{scope}[fill opacity=0,dashed]
    \filldraw[] ($(3)+(-0.5,0.5)$)
        to[out=0,in=-180] ($(4) + (0.5,0.5)$)    
        to[out=0,in=0] ($(4) + (0.5,-0.5)$)
        to[out=180,in=0] ($(3) + (-0.5,-0.5)$)
        to[out=180,in=180] ($(3) + (-0.5,0.5)$) ;
    \end{scope}    

    \begin{scope}[fill opacity=0,dashed]
    \filldraw[] ($(5)+(-0.5,0.5)$)
        to[out=0,in=-180] ($(6) + (0.5,0.5)$)    
        to[out=0,in=0] ($(6) + (0.5,-0.5)$)
        to[out=180,in=0] ($(5) + (-0.5,-0.5)$)
        to[out=180,in=180] ($(5) + (-0.5,0.5)$) ;
    \end{scope}    

    \begin{scope}[fill opacity=0,dashed]
    \filldraw[] ($(7)+(-0.5,0.5)$)
        to[out=0,in=-180] ($(8) + (0.5,0.5)$)    
        to[out=0,in=0] ($(8) + (0.5,-0.5)$)
        to[out=180,in=0] ($(7) + (-0.5,-0.5)$)
        to[out=180,in=180] ($(7) + (-0.5,0.5)$) ;
    \end{scope}    
\end{tikzpicture}
        \caption{Recursive construction of $F_n$}
        \label{recursive}
\end{figure}

Several structural properties of the dual-cube have been studied. Table~\ref{properties}, which is inspired by \cite{dual}, shows a comparison between the hypercube and the dual-cube with the same number of vertices $2^n$ for some properties. Moreover, the dual-cube has strong Hamiltonian properties. In \cite{193}, it was shown that the $n$-dimensional dual-cube $F_n$ remains hamiltonian even in the presence of up to $n-1$ faulty edges, and this bound is optimal. In \cite{188p}, the authors proved that it contains $\lfloor \frac{n-1}{2} \rfloor$ edge-disjoint hamiltonian cycles. Furthermore, in \cite{188}, it was proved that the dual-cube contains $n$ internally-disjoint hamiltonian cycles for $n\geq 2$, and this result is also optimal. Since several sufficient conditions for the existence of CIST are derived from Hamiltonicity conditions as shown before, the Hamiltonian properties of the dual-cube suggest the possible existence of two CIST in this graph.

\begin{table}[h]
    \centering
    \begin{tabular}{|c|c|c|c|c|c|}
    \hline
    Graph & Degree & \# links & Diameter & Avg. distance & Bisection width \\
    \hline
    $Q_n$ & $n$ & $2^nn$ & $n$ & $\frac{n}{2}$ & $\frac{2^n}{2}$ \\
    \hline
    $F_{\frac{n+1}{2}}$ & $\frac{n+1}{2}$ & $\frac{2^n(n+1)}{2}$ & $n+1$ & $\frac{n}{2} + 1 - \frac{1}{2^{\frac{n-1}{2}}}$ & $\frac{2^n}{4}$ \\
    \hline
    \end{tabular}
    \caption{Comparison of some properties of the hypercube and the dual-cube}
    \label{properties}
\end{table}

\section{Existence of two CIST in $F_n$}
\label{sec3}
Although the Hamiltonian properties cited in Section~\ref{sec2} suggest the existence of CIST in the dual-cube, they do not directly provide the constructions required. Therefore, in this section, we prove that the dual-cube $F_n$ contains two completely independent spanning trees for every $n \geq 5$. To establish this result, we use a cluster-based approach. More precisely, we determine suitable inter-cluster connections in the graph $\mathrm{KF}_n$ and then show that these connections can be completed by appropriate local CIST within each cluster. 

For $n \leq 3$, the dual-cube $F_n$ does not contain two CIST. Indeed, since $F_n$ is $n$-regular, it has $n2^{2n-2}$ edges; however, two edge-disjoint spanning trees require at least $2(2^{2n-1}-1)$ edges. This necessary condition is not satisfied for $n \leq3 $, making $F_4$ the first unresolved case. We therefore attempted to find two CIST in $F_4$ using the ILP model in Appendix \ref{appB}, inspired by \cite{34}, and implemented in Python with the Gurobi optimizer. The computation was performed on a machine equipped with an Intel(R) Xeon(R) CPU E5-2609 0 @ 2.40 GHz processor, 32 GB of memory, and running Windows 10 Professional 64-bit. However, even after more than 700 hours of computation, no conclusive answer was obtained. For this reason, we exploit the cluster structure of $F_n$ and decompose the structure of those global CIST into local ones within the clusters of the dual-cube. 

The local CIST of each cluster isomorphic to $Q_{n-1}$ cannot be chosen arbitrarily. Indeed, the clusters are connected through cross-edges, and each cross-edge used for each global tree must be incident on vertices that are internal in the corresponding local trees, \textit{i.e.}, if the internal vertices inside the clusters are not chosen carefully, then the cross-edges may fail to connect all local CIST into two global spanning trees of $F_n$. Hence, the local CIST within each cluster must be constructed with internal vertices chosen based on the cross-edges connecting the clusters.

This observation motivates the first step of the algorithm: Instead of constructing the local CIST independently in each cluster, we first use the contracted graph $\mathrm{KF}_n$ to determine how the clusters should be connected. Then, for each cluster, the local CIST are constructed so that their internal vertices are compatible with the selected cross-edges. This guarantees that the local constructions can be assembled into two connected global spanning trees of the whole dual-cube. The graph  $\mathrm{KF}_n$ is obtained from $F_n$ by:
\begin{itemize}
    \item replacing each cluster of $F_n$ by a vertex in $\mathrm{KF}_n$.
    \item connecting any pair of vertices of $\mathrm{KF}_n$ if and only if there exists at least one cross-edge in $F_n$ between their two corresponding clusters.
\end{itemize}

Since each cluster of class $0$ is connected by one cross-edge to each cluster of class $1$, the graph $\mathrm{KF}_n$ is isomorphic to $K_{2^{n-1},2^{n-1}}$. The goal of Step 1 is then to construct two edge-disjoint spanning trees $\mathrm{KT}_1$ and $\mathrm{KT}_2$ in $\mathrm{KF}_n$ that are later transformed into two CIST $T_1$ and $T_2$ in $F_n$. In more detail, since the clusters of $F_n$ are isomorphic to the $(n-1)$-dimensional hypercube, Step 1 consists of constructing two CIST for each cluster $\overline{C}^{b}$ such that $b =0,1$ represents the class and $C=0,\dots,2^{n-1}$ represents the number of the cluster in the class $C$, while at the same time guaranteeing that the trees in all clusters can be connected by cross edges in the respective trees $T_1$ and $T_2$. To achieve that, let us introduce the following lemmas.
\begin{lemme}
    For $n \geq 4$ and for every two disjoint unordered pairs of vertices $\{u,v\}$ and $\{x,y\}$ of $Q_n$, two completely independent spanning trees $T_1$ and $T_2$ can be constructed with $\{u,v\}$ ($\{x,y\}$, respectively) as internal vertices of $T_1$ ($T_2$, respectively). Moreover, $\{u,v\}$ ($\{x,y\}$, respectively) are called {\em generators} of $T_1$ ($T_2$, respectively).
\label{lemme1}
\end{lemme}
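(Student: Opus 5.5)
Given two disjoint pairs $\{u,v\}$ and $\{x,y\}$ in $Q_n$ with $n\ge 4$, the plan is to take the known CIST of $Q_n$ (Pai and Chang \cite{8}) and move it onto the prescribed vertices by a symmetry of $Q_n$. By Theorem~\ref{thm00}, two edge-disjoint spanning trees are CIST exactly when every vertex is internal in at most one of them. So if $T_1,T_2$ are CIST of $Q_n$ and $\sigma$ is an automorphism of $Q_n$, then $\sigma(T_1),\sigma(T_2)$ are again CIST, with internal sets $\sigma(I_1)$ and $\sigma(I_2)$, where $I_j$ is the set of internal vertices of $T_j$. It therefore suffices to find a CIST pair and an automorphism $\sigma$ with $\sigma^{-1}(u),\sigma^{-1}(v)\in I_1$ and $\sigma^{-1}(x),\sigma^{-1}(y)\in I_2$. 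The automorphism group of $Q_n$ consists of the translations $z\mapsto z\oplus a$ composed with coordinate permutations. It is transitive on ordered pairs at a given Hamming distance, but not on configurations of four points. The proof therefore has to control the geometry of $I_1$ and $I_2$.

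\textbf{Step 1: a flexible base construction.} I would build a CIST pair of $Q_n$ recursively, splitting $Q_n$ along one coordinate into two copies of $Q_{n-1}$. The base case $Q_4$ should be checked exhaustively, ideally by computer. The base pair should be chosen so that $I_1$ and $I_2$ are both large, roughly half the vertices each, and so that they are ``rich'': for every pair of distances $d_1=d(u,v)$ and $d_2=d(x,y)$, and every allowed relative position of the two pairs, some automorphic image of the configuration has its first pair inside $I_1$ and its second pair inside $I_2$.

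\textbf{Step 2: the induction $n-1\to n$.} Write $Q_n=Q_{n-1}^0\cup Q_{n-1}^1$ with perfect-matching edges between the copies, and distribute the four prescribed vertices over the two halves. In each half, choose auxiliary generators as follows.
\begin{itemize}
\item If a half contains fewer than two vertices of a pair, pick the missing generators freely, but make them disjoint from the prescribed vertices.
\item Choose at least one generator of $T_1$ in $Q^0_{n-1}$ whose matching neighbour is a generator of $T_1$ in $Q^1_{n-1}$, so that one matching edge joins the two copies of $T_1$. Do the same for $T_2$, using a different matching edge.
\end{itemize}
Apply the induction hypothesis in each half to obtain local CIST whose generators are internal. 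Join the two halves by the two chosen matching edges; their endpoints are internal in the correct tree on both sides, so the edges are disjoint and the result is two spanning trees. Finally, the vertices not yet covered must be attached. They are exactly the vertices of one half, and each should be hung as a leaf... in fact, no: within each half all vertices are already spanned, so the union of the two local trees plus one connecting edge per tree already spans $Q_n$. Edge-disjointness holds, and internal-disjointness holds because the connecting edges join vertices that are already internal. By Theorem~\ref{thm00}, the result is a CIST pair.

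\textbf{Main obstacle.} The main difficulty is the case analysis that makes the connecting choices always possible. For instance, when a prescribed vertex $u$ of $T_1$ lies in $Q^0_{n-1}$, its matching neighbour may be $x$ or $y$, which would have to be internal in $T_2$. Then that edge cannot be the connecting edge for $T_1$. The fix is simply to choose a different fresh pair $w,w'$ with $w\oplus w'=e_n$ as the connecting pair. Since $Q_{n-1}$ has at least $8$ vertices and only at most $4$ of them are forbidden, such a pair always exists. This also means each half receives at most two prescribed generators per tree plus connector vertices. The induction hypothesis, however, allows only exactly two generators per tree. So I would strengthen the lemma to ``any two disjoint sets $A,B$ with $|A|,|B|\le 3$'', or else place the connector at a prescribed generator whenever possible. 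Verifying the strengthened base case in $Q_4$ is the step I expect to be hardest, and I would settle it by a computer search.
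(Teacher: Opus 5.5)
Your gluing step is fine: two distinct matching edges whose endpoints are internal in the correct local trees do turn two local CIST pairs into a CIST pair of $Q_n$, by Theorem~\ref{thm00}. The gap is that the proposal stops exactly where the proof has to happen, and the fix you suggest does not close the induction.

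With the lemma as stated, each half must receive \emph{exactly} two generators per tree. So a fresh connector can only be used for a tree in a half that contains at most one of that tree's prescribed generators. In a $4+0$ or $3+1$ distribution, some half contains both generators of a tree, and the connector must sit on one of them. For a fixed coordinate this can be impossible: if the split is $\{u,v\}\mid\{x,y\}$ with $\{x,y\}=\{u\oplus e_i,v\oplus e_i\}$, no prescribed vertex can carry either connector.

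Strengthening to $|A|,|B|\le 3$ does not rescue this. The strengthened statement is not closed under your own induction step: if all three vertices of $A$ fall in one half and the connector is fresh, that half needs four generators. You are then back to having to put connectors on prescribed vertices, which is the original difficulty. Its $Q_4$ base case would also be a new, unverified claim. Step~1 has a similar problem, since the ``richness'' of $I_1,I_2$ under automorphisms is essentially a restatement of the lemma, not something you establish.

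The missing idea is bookkeeping that keeps exactly two generators per tree in each half. First choose the splitting coordinate $i$ so that the pattern is not $u_i=v_i\neq x_i=y_i$; such an $i$ exists because $u\neq v$. Up to swapping the halves, $u\leftrightarrow v$, $x\leftrightarrow y$, and the two pairs, what remains is $4+0$, ``$u$ alone'', or one vertex of each pair in each half. In each case, every connector endpoint is either a prescribed generator or a filler generator that the half needs anyway.

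For $4+0$, use the connectors $(u^0,u^1)$ and $(x^0,x^1)$, and take an isomorphic copy of the local CIST in the other half.

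For $u\in Q^0$ and $v,x,y\in Q^1$, take a filler $a^1$ as the second $T_1$-generator of $Q^1$, and connect $T_2$ through whichever of $x,y$ is not matched to $u^0$. Connect $T_1$ by $(u^0,v^1)$ if these two are matched; otherwise connect it through $(v^0,v^1)$, with $v^0$ serving as the second $T_1$-generator of $Q^0$.

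For $u,x\in Q^0$ and $v,y\in Q^1$, pick two fresh matched pairs $(a^0,a^1)$ and $(b^0,b^1)$ that avoid the prescribed vertices and their partners. Use them both as the second generators and as the connectors.

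This is the case analysis the paper carries out. Without it, or an equivalent argument, your induction does not go through.
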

\begin{proof}
We prove the lemma by induction on $n$, with an exhaustive verification for the base case $n = 4$, and an induction step showing how to extend the construction from $Q_n$ to $Q_{n+1}$:

\textbf{Verification of the base case $n = 4$}:\\
For $n= 4$, an exhaustive verification of the lemma is performed. More precisely, for every choice of two disjoint pairs of vertices $\{u,v\}$ and $\{x,y\}$ in $Q_n$, the ILP program of Appendix \ref{appB} is run after adding the constraints:
    \[
    y_{1u}=y_{1v}=1,\qquad y_{2x}=y_{2y}=1.
    \]
These constraints force $u$ and $v$ to be internal vertices of the tree $T_1$, and $x$ and $y$ to be internal vertices of the tree $T_2$. Since the hypercube is vertex-transitive, we can fix one vertex of the first pair. Thus, without loss of generality, we set $u = 0000$ and enumerate all possible choices of the remaining vertices $v,x,y$, while maintaining the previous condition that the two pairs $\{u,v\}$ and $\{x,y\}$ are disjoint. Therefore, the number of instances to be checked is:
    \[
    \binom{15}{3}=455.
    \]
In the following, vertices are labeled using the decimal representation of their binary addresses. Thus, the vertex $u=0000$ is labeled $0$.
For each instance, the ILP found a feasible solution. Therefore, the case $n = 4$ holds. As an illustration, consider the following two disjoint pairs: 
\[
\{u,v\}=\{0,7\}
\qquad \text{and} \qquad
\{x,y\}=\{3,10\},
\]
the ILP formulation returns two CIST in which $u$ and $v$ are internal
vertices of $T_1$, while $x$ and $y$ are internal vertices of $T_2$ as shown in Figure~\ref{Q4}.
\begin{figure}[h]
    \centering
    \begin{tikzpicture}   
\node[shape=circle,draw=red,thick,scale=0.7] (1) at (-10,1) {$u$};
\node[shape=circle,draw=red,thick,scale=0.7] (2) at (-8,1) {1};
\node[shape=circle,draw=blue,thick,scale=0.7] (3) at (-10,3) {2};
\node[shape=circle,draw=blue,thick,scale=0.7] (4) at (-8,3) {$x$};
\node[shape=circle,draw=blue,thick,scale=0.7] (5) at (-9,2) {4};
\node[shape=circle,draw=blue,thick,scale=0.7] (6) at (-7,2) {5};
\node[shape=circle,draw=blue,thick,scale=0.7] (7) at (-9,4) {6};
\node[shape=circle,draw=red,thick,scale=0.7] (8) at (-7,4) {$v$};
\node[shape=circle,draw=blue,thick,scale=0.7] (9) at (-6,1) {8};
\node[shape=circle,draw=red,thick,scale=0.7] (10) at (-4,1) {9};  
\node[shape=circle,draw=blue,thick,scale=0.7] (11) at (-6,3) {$y$};
\node[shape=circle,draw=red,thick,scale=0.7] (12) at (-4,3) {11};
\node[shape=circle,draw=red,thick,scale=0.7] (13) at (-5,2) {12};
\node[shape=circle,draw=blue,thick,scale=0.7] (14) at (-3,2) {13};
\node[shape=circle,draw=red,thick,scale=0.7] (15) at (-5,4) {14};
\node[shape=circle,draw=red,thick,scale=0.7] (16) at (-3,4) {15};

\draw[-,thick,red] (7) to[out=20, in=160] (15);
\draw[-,thick,red] (8) to[out=20, in=160] (16);
\draw[-,thick,blue] (3) to[out=20, in=160] (11);
\draw[-,thick,blue] (4) to[out=20, in=160] (12);
\draw[-] (5) to[out=-20, in=-160] (13);
\draw[-,thick,blue] (6) to[out=-20, in=-160] (14);
\draw[-,thick,blue] (1) to[out=-20, in=-160] (9);
\draw[-,thick,red] (2) to[out=-20, in=-160] (10);

\path[-,thick,blue] (7) edge node {} (8);
\path[-,thick,red] (15) edge node {} (16);
\path[-,thick,blue] (3) edge node {} (4);
\path[-,thick,red] (11) edge node {} (12);
\path[-,thick,blue] (5) edge node {} (6);
\path[-,thick,red] (1) edge node {} (2);
\path[-,thick,red] (13) edge node {} (14);
\path[-,thick,red] (10) edge node {} (9);
\path[-,thick,blue] (7) edge node {} (5);
\path[-,thick,red] (6) edge node {} (8);
\path[-,thick,red] (3) edge node {} (1);
\path[-,thick,blue] (4) edge node {} (2);
\path[-,thick,blue] (11) edge node {} (9);
\path[-,thick,red] (10) edge node {} (12);
\path[-,thick,red] (15) edge node {} (13);
\path[-,thick,blue] (14) edge node {} (16);
\path[-,thick,blue] (7) edge node {} (3);
\path[-,thick,red] (4) edge node {} (8);
\path[-,thick,red] (5) edge node {} (1);
\path[-] (6) edge node {} (2);
\path[-,thick,blue] (11) edge node {} (15);
\path[-,thick,red] (16) edge node {} (12);
\path[-,thick,blue] (9) edge node {} (13);
\path[-,thick,blue] (14) edge node {} (10);
    \end{tikzpicture}
        \caption{Two CIST of $Q_4$ with the generator pairs
$\{u =0,v = 7\}$ for $T_1$ and $\{x = 3,y = 10\}$ for $T_2$.}
        \label{Q4}
\end{figure}

\textbf{Extending the construction from $Q_n$ to $Q_{n+1}$}:\\
Assume that the statement holds for $Q_n$ for some $n\geq 4$, we prove that it also holds for $Q_{n+1}$: Recall that, for any $1 \leq i \leq n$, $Q_{n+1}$ can be decomposed into two copies of $Q_n$, denoted by $Q_n^{i,0}$ and $Q_n^{i,1}$, such that $Q_n^{i,j}$ contains the vertices whose $i$-th coordinate is $j$ for $j =0,1$. For every vertex $z\in V(Q_{n+1})$, we denote by $z^{i,0}$ and $z^{i,1}$ its corresponding copies in $Q_n^{i,0}$ and $Q_n^{i,1}$ respectively. The edge $(z^{i,0},z^{i,1})$ is called an \textit{$(n+1)$-dimensional} edge. Moreover, write \(z=(z_{n+1},\ldots,z_1)\) for every
$ z\in V(Q_{n+1})$.

Let $\{u,v\}$ and $\{x,y\}$ be two disjoint pairs of vertices of $Q_{n+1}$. The goal is to construct two completely independent spanning trees $T_1$ and $T_2$ such that $u$ and $v$ are internal vertices of $T_1$, and $x$ and $y$ are internal vertices of $T_2$. The proof is divided into three main cases, depending on the positions of the vertices in the two copies $Q_n^{i,0}$ and $Q_n^{i,1}$ for a certain value of $i$. By symmetry between the two copies $Q_n^{i,0}$ and $Q_n^{i,1}$, and by exchanging $u$ with $v$, or $x$ with $y$, the following cases cover all possible distributions of the four vertices between the two copies. Indeed, the distribution is either of type $4+0$, $3+1$, or $2+2$. In the $2+2$ case, either the two vertices for the same tree are in the same copy, or each copy contains one vertex for each tree.

\paragraph{Case 1.} $ \exists \ i$ such that $u,v,x,y \in V(Q_n^{i,0})$.

By the induction hypothesis, two completely independent spanning trees $T_1^0$ and $T_2^0$ can be constructed in $Q_n^{i,0}$ such that $u^{i,0}$ and $v^{i,0}$ are internal vertices of $T_1^0$, while $x^{i,0}$ and $y^{i,0}$ are internal vertices of $T_2^0$. Now consider the second copy $Q_n^{i,1}$. We construct two spanning trees $T_1^1$ and $T_2^1$ in $Q_n^1$ by taking the isomorphic copies of $T_1^0$ and $T_2^0$, respectively. Hence, $u^{i,1}$ and $v^{i,1}$ are internal vertices of $T_1^1$, while $x^{i,1}$ and $y^{i,1}$ are internal vertices of $T_2^1$. We define:
\[
T_1 = T_1^0 \cup T_1^1 \cup (u^{i,0},u^{i,1})),
\]
and
\[
T_2 = T_2^0 \cup T_2^1 \cup (x^{i,0},x^{i,1}).
\]
Since $T_1^0$ and $T_1^1$ are spanning trees of the two copies $Q_n^{i,0}$ and $Q_n^{i,1}$, adding these edges connects them and creates a spanning tree of $Q_{n+1}$, the same applies for $T_2^0$ and $T_2^1$. Thus, $T_1$ and $T_2$ are two CIST of $Q_{n+1}$.

\paragraph{Case 2.} $ \exists \ i$ such that $u \in V(Q_n^{i,0})$ and $v,x,y \in V(Q_n^{i,1})$.

We first choose a random vertex $a^{i,1} \in V(Q_n^{i,1})\setminus\{v^{i,1},x^{i,1},y^{i,1} \}$. By the induction hypothesis, we construct two CIST in $Q_n^{i,1}$ such that $v^{i,1}$ and $a^{i,1}$ are internal vertices of $T_1^{i,1}$, while $x^{i,1}$ and $y^{i,1}$ are internal vertices of $T_2^{i,1}$. Then, we distinguish three subcases according to the $(n+1)$-dimensional neighbor of $u^{i,0}$:

\textbf{Subcase 2.1.} $u^{i,0}$ and $v^{i,1}$ are $(n+1)$-dimensional neighbors:\\
Let $a^{i,0}$, $x^{i,0}$, and $y^{i,0}$ be the $(n+1)$-dimensional neighbors of $a^{i,1}$, $x^{i,1}$, and $y^{i,1}$ in $Q_n^{i,0}$, respectively. In $Q_n^{i,0}$, we construct two CIST such that $u^{i,0}$ and $a^{i,0}$ are internal vertices of $T_1^0$, while $x^{i,0}$ and $y^{i,0}$ are internal vertices of $T_2^0$. We then connect the two copies using the edges $(u^{i,0},v^{i,1})$ for $T_1$ and $(x^{i,0},x^{i,1})$ for $T_2$. The resulting trees are two CIST of $Q_{n+1}$.

\textbf{Subcase 2.2.} $u^{i,0}$ is a $(n+1)$-dimensional neighbor of one of $x^{i,1}$ and $y^{i,1}$:\\
    Without loss of generality, assume that $u^{i,0}$ and $x^{i,1}$ are $(n+1)$-dimensional neighbors. Let $v^{i,0}$ and $y^{i,0}$ be the $(n+1)$-dimensional neighbors of $v^{i,1}$ and $y^{i,1}$ in $Q_n^{i,0}$, respectively. We construct two CIST in $Q_n^{i,0}$ such that $u^{i,0}$ and $v^{i,0}$ are internal vertices of $T_1^0$, while $y^{i,0}$ and a random vertex $b^{i,0} \notin \{ u^{i,0},v^{i,0},y^{i,0} \}$ are internal vertices of $T_2^0$. We then connect the two copies using the edge $(v^{i,0},v^{i,1})$ for $T_1$ and the edge $(y^{i,0},y^{i,1})$ for $T_2$. The resulting trees are two CIST of $Q_{n+1}$.
    
\textbf{Subcase 2.3.} $u^{i,0}$ is not a $(k+1)$-dimensional neighbor of any vertex among $v^{i,1},x^{i,1},y^{i,1}$:\\
    Let $v^{i,0}$, $x^{i,0}$, and $y^{i,0}$ be the $(n+1)$-dimensional neighbors of $v^{i,1}$, $x^{i,1}$, and $y^{i,1}$ in $Q_n^{i,0}$, respectively. In this case, we construct two CIST in $Q_n^{i,0}$ such that $u^{i,0}$ and $v^{i,0}$ are internal vertices of $T_1^0$, while $x^{i,0}$ and $y^{i,0}$ are internal vertices of $T_2^0$. We then connect the two copies using the edge $(v^{i,0},v^{i,1})$ for $T_1$ and the edge $(y_{i,0},y^{i,1})$ for $T_2$. The resulting trees are two CIST of $Q_{n+1}$.

\paragraph{Case 3.} $\exists i$ such that $u,x \in V(Q_n^{i,0})$ and $v,y \in V(Q_n^{i,1})$.

In this case, notice that if $u^{i,0}$ and $v^{i,1}$ are $(n+1)$-dimensional neighbors, then the edge $(u^{i,0},v^{i,1})$ can be used to connect the two copies for $T_1$. Similarly, if $x^{i,0}$ and $y^{i,1}$ are $(n+1)$-dimensional neighbors, then the edge $(x^{i,0},y^{i,1})$ can be used to connect the two copies for $T_2$. Therefore, we distinguish three subcases according to the $(n+1)$-dimensional neighbors among the cross-pairs $(u^{i,0},y^{i,1})$ and $(x^{i,0},v^{i,1})$:

\textbf{Subcase 3.1.} Neither $(u^{i,0},y^{i,1})$ nor $(x^{i,0},v^{i,1})$ is a $(n+1)$-dimensional edge.

\begin{sloppypar}
We choose a random vertex $a^{i,0}\in V(Q_n^{i,0})\setminus\{u^{i,0},x^{i,0},v^{i,0},y^{i,0} \}$, and denote by $a^{i,1}$ its $(n+1)$-dimensional neighbor in $Q_n^{i,1}$. Similarly, we choose a random vertex $b^{i,1}\in V(Q_n^{i,1})\setminus\{v^{i,1},y^{i,1},u^{i,1},x^{i,1},a^{i,1}\}$, and denote by $b^{i,0}$ its $(n+1)$-dimensional neighbor in $Q_n^{i,0}$. By the induction hypothesis, we construct two CIST in $Q_1^{i,0}$ such that $u^{i,0}$ and $a_{i,0}$ are internal vertices of $T_1^0$, and $x^{i,0}$ and $b_{i,0}$ are internal vertices of $T_2^0$. Similarly, we construct two CIST in $Q_n^{i,1}$ such that $v^{i,1}$ and $a^{i,1}$ are internal vertices of $T_1^1$, while $y^{i,1}$ and $b^{i,1}$ are internal vertices of $T_2^1$. We then connect the two copies by using the edge $(a^{i,0},a^{i,1})$ for $T_1$ and the edge $(b^{i,0},b^{i,1})$ for $T_2$. Therefore, the resulting trees are two CIST of $Q_{n+1}$. Notice that possible overlaps, such as $u^{i,0}=v^{i,0}$ or $x^{i,0}=y^{i,0}$, do not affect the construction. This is because we do not use the edge $(u^{i,0},v^{i,1})$ nor $(x^{i,0},y^{i,1})$ to connect the two copies.
\end{sloppypar}

\textbf{Subcase 3.2.} Exactly one of $(u,y)$ and $(x,v)$ is a $(n+1)$-dimensional edge. 

Without loss of generality, assume that $u^0$ and $y^1$ are $(n+1)$-dimensional neighbors. Then $x^0$ and $v^1$ are not $(n+1)$-dimensional neighbors. Let $v^0$ be the $(n+1)$-dimensional neighbor of $v^1$ in $Q_n^0$, and let $x^1$ be the $(n+1)$-dimensional neighbor of $x^0$ in $Q_n^1$. We have $v^0\neq x^0$ and $x^1\neq v^1$. We construct two CIST in $Q_n^0$ such that $u^0$ and $v^0$ are internal vertices of $T_1^0$, while $x^0$ and a random vertex $a^0 \notin \{ u^0,x^0,v^0 \}$ are internal vertices of $T_2^0$. Similarly, we construct two CIST in $Q_n^1$ such that $v^1$ and a random vertex $b^1 \notin \{ v^1,y^1,x^1 \}$ are internal vertices of $T_1^1$, while $y^1$ and $x^1$ are internal vertices of $T_2^1$. We then connect the two copies using the edge $(v^0,v^1)$ for $T_1$ and the edge $(x^0,x^1)$ for $T_2$. The resulting trees are two CIST of $Q_{n+1}$.

\textbf{Subcase 3.3.} Both $(u,y)$ and $(x,v)$ are $(n+1)$-dimensional edges.

Let $a^{i,0},b^{i,0} \notin \{ u^{i,0},x^{i,0} \}$ in $Q_n^{i,0}$, and let $a^{i,1},b^{i,1} \notin \{ v^{i,1},y^{i,1} \}$ in $Q_n^{i,1}$ be the $(n+1)$-dimensional neighbors of $a^{i,0},b^{i,0}$, respectively. In $Q_n^{i,0}$, we construct two CIST such that $u^{i,0}$ and $a^{i,0}$ are internal vertices of $T_1^0$, while $x^{i,0}$ and $b^{i,0}$ are internal vertices of $T_2^0$. Similarly, in $Q_n^{i,1}$, we construct two CIST such that $y^{i,1}$ and $a^{i,1}$ are internal vertices of $T_1^1$, while $v^{i,1}$ and $b^{i,1}$ are internal vertices of $T_2^1$. We then connect the two copies using the edge $(a^{i,0},a^{i,1})$ for $T_1$ and the edge $(b^{i,0},b^{i,1})$ for $T_2$. The resulting trees are two CIST of $Q_{n+1}$.

\paragraph{Case 4.} Assume that none of Cases 1-3 applies. This means that for each position $i$, $1\le i\le n$, none of the following cases occurs:
\[
u_i = v_i = x_i =y_i,\qquad
u_i \neq v_i= x_i =y_i,\qquad
u_i=x_i \neq v_i=y_i.
\]
Therefore, for each position $i$, we have $u_i=v_i \neq x_i=y_i$. This means that $u=v$, which contradicts the hypothesis that $u,v,x,y$ are all distinct.
\end{proof}

\begin{lemme}
    Let $n \geq 5$. $\mathrm{KF}_n$ has at least two edge-disjoint Hamiltonian paths.
    \label{lemme2}
\end{lemme}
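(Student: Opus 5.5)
Since $\mathrm{KF}_n$ is isomorphic to $K_{m,m}$ with $m=2^{n-1}$, the plan is to prove the statement directly for the complete bipartite graph $K_{m,m}$ with $m\ge 2$ (in particular $m=16$ for $n=5$), by writing down two explicit Hamiltonian paths. Label the clusters of class $0$ by $a_0,\dots,a_{m-1}$ and those of class $1$ by $b_0,\dots,b_{m-1}$, where indices are taken modulo $m$. Every edge $a_ib_j$ is present, so a Hamiltonian path is any alternating sequence that uses each $a_i$ and each $b_j$ exactly once. I will use a "difference" labelling: the edge $a_ib_j$ has difference $j-i \pmod m$. This makes it easy to control which edges each path uses.

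The first path is
\[
P_1:\ a_0,b_0,a_1,b_1,\dots,a_{m-1},b_{m-1}.
\]
Its edges are $a_ib_i$, which have difference $0$, and $b_ia_{i+1}$ for $0\le i\le m-2$, which have difference $-1$.

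The second path is
\[
P_2:\ a_0,b_2,a_1,b_3,\dots,a_i,b_{i+2},\dots,a_{m-1},b_{m+1}.
\]
Its edges are $a_ib_{i+2}$, which have difference $2$, and $b_{i+2}a_{i+1}$, which have difference $1$. Each $a_i$ appears once in each path. In $P_2$ the $b$-vertices appear as $b_{i+2}$ for $i=0,\dots,m-1$, which runs over all residues mod $m$, so both sequences are Hamiltonian paths.

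It remains to check that the two paths are edge-disjoint. This holds as soon as the difference sets $\{0,-1\}$ and $\{1,2\}$ are disjoint modulo $m$, which is true for $m\ge 4$. Since $n\ge 5$ gives $m\ge 16$, the condition is satisfied.

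The statement asks for "at least two" paths, so I will only note in passing that the same idea gives more. The paths built from difference pairs $\{2k,2k-1\}$ are pairwise edge-disjoint, which yields $m/2$ such paths. This matches the edge count $m^2 \ge (m/2)(2m-1)$.

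No step is a genuine obstacle: the only care needed is the index bookkeeping modulo $m$ and confirming that the residues $0,-1,1,2$ are distinct for the relevant $m$. I should also observe that the bound $n\ge 5$ is not needed for this lemma itself, since $m\ge 4$ already suffices. The restriction presumably comes from later uses, such as applying Lemma~\ref{lemme1} inside clusters $Q_{n-1}$, which requires $n-1\ge 4$. The real work of the section lies afterwards, in using the two paths $P_1,P_2$ as $\mathrm{KT}_1,\mathrm{KT}_2$. There each cluster has at most two incident cross-edges per tree, and those endpoints become the generator pairs of Lemma~\ref{lemme1}. For that step one needs the endpoints of $P_1$ and $P_2$ to be disjoint within each cluster; any vertex that lies only at the end of a path can be paired with an arbitrary extra generator.
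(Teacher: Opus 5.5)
Your proof is correct, and it takes a different and cleaner route than the paper's.

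The paper fixes an odd step $p$, coprime to $m=2^{n-1}$, and walks $\overline{1}^0,\overline{p+1}^1,\overline{2p+1}^0,\dots$. Because the class alternates at every step, one sweep reaches only the odd class-$0$ and even class-$1$ indices. The paper therefore needs a second sweep starting at $\overline{2}^0$, plus a connecting edge $\overline{(m-1)p+1}^1\,\overline{2}^0$. In your difference language, $P_p$ occupies the classes $p$, $-p$ and $-p-1$. The paper then simply asserts that any two distinct primes $p,q$ give edge-disjoint paths.

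That assertion needs exactly the residue check you carry out, and it fails in general. For $m=16$, $p=3$ and $q=13$ satisfy $-3\equiv 13$, so $P_3$ and $P_{13}$ share $28$ of their $31$ edges; distinct primes with $p\equiv q \pmod m$ are also possible. The pair $p=1$, $q=3$ actually used in \textsc{F-2CIST-Base} is fine, since $\{1,-1,-2\}$ and $\{3,-3,-4\}$ are disjoint modulo every $m\ge 8$.

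Your zigzag $a_0,b_s,a_1,b_{s+1},\dots$ visits the class-$0$ indices $0,1,2,\dots$ and the class-$1$ indices $s,s+1,\dots$ in order, which buys several things:
\begin{itemize}
\item a single sweep is already Hamiltonian;
\item each path lies in exactly two difference classes $\{s,s-1\}$, so edge-disjointness becomes a transparent statement about residues;
\item it yields $m/2$ pairwise edge-disjoint paths, matching the folklore bound the paper cites;
\item it shows the lemma holds for all $n\ge 3$.
\end{itemize}

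Your closing concern about disjoint endpoints inside each cluster is automatic. Every vertex carries exactly one cross-edge, so edge-disjoint paths in $\mathrm{KF}_n$ meet each cluster in disjoint vertex sets.
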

\begin{proof}
The fact that $K_{n,n}$ has $\lfloor n/2\rfloor$ disjoint Hamiltonian cycles, and hence at least two Hamiltonian paths, is folklore \cite{laskar_decomposition_1976}. For the sake of completeness, we give the following construction: Let $m=2^{n-1}$ and $p$ be an odd number. Since $m=2^{n-1}$, we have $\gcd(p,m)=1$. Therefore, the sequence:
\[
x,\ x+p,\ x+2p,\ \dots,\ x+(m-1)p
\]
contains every value modulo $m$ exactly once. We define the path:
\[
P^1_p= \overline{1}^0,  \overline{p+1}^1,  \overline{2p+1}^0,  \overline{3p+1}^1,  \overline{4p+1}^0, \dots,  \overline{(m-1)p+1}^1,
\]
with all indices taken modulo $m$. Since $m$ is even and $p$ is odd, this first part visits exactly $\frac{m}{2}$ clusters of class $0$ and $\frac{m}{2}$ clusters of class $1$. More precisely, the class $0$ indices are the odd residues modulo $m$, while the class $1$ indices are the even residues modulo $m$. Therefore, to visit the remaining clusters, a shift of one position is introduced in the second subpath (\textit{i.e.} we start from $C_2^0$):
\[
P^2_p=  \overline{2}^0,  \overline{p+2}^1,  \overline{2p+2}^0,  \overline{3p+2}^1,  \overline{4p+2}^0, \dots,  \overline{(m-1)p+2}^1.
\]
The second part visits the remaining $\frac{m}{2}$ clusters of class $0$ and the remaining $\frac{m}{2}$ clusters of class $1$. Consequently, the path $P_p = P^1_p \cup P^2_p \cup \{ \overline{(m-1)p+1}^1, \overline{2}^0\}$ visits every cluster of $\mathrm{KF}_n$ exactly once. Hence, $P_p$ is a Hamiltonian path of $\mathrm{KF}_n$. By choosing two distinct prime numbers $p$ and $q$, and applying the construction above for each prime number, two edge-disjoint Hamiltonian paths $P_p$ and $P_q$ are obtained. The construction of the path $P_p$ is illustrated in Figure~\ref{pathconstruction}, where the first part follows the modular progression with step $p$, and the second part uses the same progression after a shift by one position.

\begin{figure}[h]
\centering
\begin{tikzpicture}

\node[draw=black,shape = rectangle] (c01) at (0,0) {$\overline{1}^0$};
\node[draw=black,shape = rectangle] (c02) at (2,0) {$ \overline{2}^0$};
\node at (3.5,0) {$\cdots$};
\node[draw=black,shape = rectangle] (c03p1) at (5,0) {$ \overline{2p+1}^0$};

\node[draw=black,shape = rectangle] (c03p2) at (7.2,0) {$ \overline{2p+2}^0$};
\node at (8.8,0) {$\cdots$};

\node at (1,-3) {$\cdots$};
\node[draw=black,shape = rectangle] (c12p1) at (2.3,-3) {$ \overline{p+1}^1$};
\node[draw=black,shape = rectangle] (c12p2) at (4.5,-3) {$ \overline{p+2}^1$};
\node at (6.3,-3) {$\cdots$};
\node[draw=black,shape = rectangle] (c1m2p1) at (8.5,-3) {$ \overline{(m-1)p+1}^1$};

\node[draw=black,shape = rectangle] (c1m1p1) at (11.3,-3) {$ \overline{(m-1)p+2}^1$};

    \path[-,thick,blue] (c01) edge (c12p1);
    \path[-,thick,blue] (c12p1) edge (c03p1);

    \draw[-,thick,orange] (c02) -- node[midway,above,scale = 0.8] {$+1$} (c1m2p1);
    \path[-,thick,red] (c02) edge (c12p2);
    \path[-,thick,red] (c03p2) edge (c12p2);

\node[blue] (pp1) at (0.5,-1.5) {$P_p^1$};
\node[red] (pp2) at (10.3,-1.5) {$P_p^2$};
\node[blue] (dot1) at (6.8,-1.5) {$\dots$};
\node[red] (dot2) at (9.3,-1.5) {$\dots$};

    \path[-,thick,blue] (c03p1) edge (6.5,-1.4);
    \path[-,thick,blue] (c1m2p1) edge (dot1);

    \path[-,thick,red] (c03p2) edge (9,-1.4);
    \path[-,thick,red] (c1m1p1) edge (dot2);
    
\end{tikzpicture}
\caption{Illustration of the construction of $P_p$ in $\mathrm{KF}_n$}
\label{pathconstruction}
\end{figure}

\end{proof}

\begin{thm}
    Let $n \geq 5$. The dual-cube $F_n$ has two CIST.
    \label{thm0}
\end{thm}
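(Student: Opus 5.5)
Take two edge-disjoint Hamiltonian paths $P_p$ and $P_q$ of $\mathrm{KF}_n$ from Lemma~\ref{lemme2}, and set $\mathrm{KT}_1=P_p$, $\mathrm{KT}_2=P_q$. Each edge of $\mathrm{KF}_n$ corresponds to exactly one cross-edge of $F_n$, so $P_p$ and $P_q$ give two disjoint sets $E_1,E_2$ of $2^n-1$ cross-edges each. Together with spanning trees inside the clusters, these sets will join the clusters into two global spanning trees. By Theorem~\ref{thm00}, the global trees $T_1=\bigcup_C T_1^C\cup E_1$ and $T_2=\bigcup_C T_2^C\cup E_2$ are completely independent if three conditions hold: they are edge-disjoint, spanning and acyclic, and no vertex is internal in both.

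\textbf{Step 1 (spanning trees).} In each cluster $C$ (a copy of $Q_{n-1}$, $n-1\ge 4$), choose two CIST $T_1^C,T_2^C$ by Lemma~\ref{lemme1}. Each cluster lies on a Hamiltonian path, so it is incident to at most two path edges of $P_p$, hence to at most two cross-edges of $E_1$, and likewise for $E_2$. Let $u^C,v^C$ be the endpoints in $C$ of the $E_1$-edges at $C$, and $x^C,y^C$ the endpoints of the $E_2$-edges. Each vertex of $F_n$ has exactly one cross-edge, and $E_1\cap E_2=\emptyset$. Hence the four vertices are distinct, which gives the disjoint pairs required by Lemma~\ref{lemme1}. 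Apply Lemma~\ref{lemme1} with generators $\{u^C,v^C\}$ for $T_1^C$ and $\{x^C,y^C\}$ for $T_2^C$. Then $T_i$ is a spanning tree of $F_n$: the local trees span the clusters and are connected along the Hamiltonian path $\mathrm{KT}_i$ by exactly $2^n-1$ edges, giving $2^{2n-1}-1$ edges in total and no cycle.

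\textbf{Step 2 (endpoint clusters).} Each endpoint cluster of a path has only one incident path edge, so only one generator is forced there. Pick any further vertex of $C$, distinct from the three forced ones, as the second generator of the corresponding tree. This needs $|V(C)|=2^{n-1}\ge 4$.

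\textbf{Step 3 (edge-disjointness).} The intra-cluster edges of $T_1$ and $T_2$ are disjoint because the local trees are CIST. The cross-edges are disjoint because $E_1\cap E_2=\emptyset$, and cross-edges never coincide with cluster edges.

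\textbf{Step 4 (internal vertices).} Consider a vertex $w$ in cluster $C$. If $w$ is not incident to an edge of $E_1\cup E_2$, its degrees equal its local degrees, and these are fine since the local trees are CIST. If $w$ is incident to an edge of $E_1$, then $w\in\{u^C,v^C\}$ and is internal in $T_1^C$. By Theorem~\ref{thm00}, $w$ is then a leaf in $T_2^C$, and since its only cross-edge is in $E_1$, it stays a leaf in $T_2$. The symmetric argument covers $E_2$. Theorem~\ref{thm00} then finishes the proof.

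The main obstacle is Step 4. A cross-edge adds a degree to its endpoint, so the generator mechanism is essential: a vertex that is a leaf locally in $T_i$ must never receive a $T_i$ cross-edge. The distinctness of the four generators, guaranteed by the one-cross-edge-per-vertex property, is exactly what makes Lemma~\ref{lemme1} applicable. I would also check that the two chosen odd steps $p\ne q$ really give edge-disjoint paths, including the connecting edge $\{\overline{(m-1)p+1}^1,\overline{2}^0\}$. If this fails for some choices, I would pick $p,q$ with $p\not\equiv \pm q$ modulo $m$, which is possible since $m=2^{n-1}\ge 16$ for $n\ge5$.
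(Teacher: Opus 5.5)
Your proposal is correct and follows the paper's own argument: the two edge-disjoint Hamiltonian paths of $\mathrm{KF}_n$ from Lemma~\ref{lemme2} fix the cross-edges, Lemma~\ref{lemme1} supplies local CIST whose generators are the endpoints of those cross-edges, and the union is checked against Theorem~\ref{thm00}. You are in fact more careful than the paper on three points it leaves implicit: the distinctness of the four generators (via the one-cross-edge-per-vertex property), the endpoint clusters, and the requirement that $p,q$ be odd with $p\not\equiv\pm q \pmod{m}$; this last condition is genuinely needed, since, e.g., the primes $3$ and $13$ yield paths that share the edge $\{\overline{3}^0,\overline{6}^1\}$ when $m=16$.
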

\begin{proof}
Assume that $P_1$ and $P_2$ are two edge-disjoint Hamiltonian paths of $\mathrm{KF}_n$ constructed using Lemma \ref{lemme2}. The edges of $P_1$ and $P_2$ incident with each cluster $C$ determine which vertices of $C$ must be internal in the local trees. 
For instance, if the cross edges of some $P_i$ connect cluster $\overline{l}^0$ with cluster $\overline{k}^1$ and $\overline{m}^1$, then the two vertices $k$ and $m$ have to be internal in the tree $T_i$ of cluster $\overline{\ell}^0$. Figure~\ref{Pi} illustrates how the subpath $\overline{k}^1 \overline{\ell}^0 \overline{m}^1$ of $P_i$ determines the internal vertices of the local tree inside $\overline{\ell}^0$. Indeed, the two cross-edges incident with $\overline{\ell}^0$ have extremities labeled $k$ and $m$, and these vertices must therefore be internal in $T_i(\overline{\ell}^0)$.

\begin{figure}[h]
    \centering
    \begin{tikzpicture}[scale=0.7]
\node[shape=circle,scale=1.2] (1) at (0,1) {$\overline{\ell}^0$};
\node[shape=circle,scale=1.2] (4) at (-3.5,-2.5) {$\overline{k}^1$};
\node[shape=circle,scale=1.2] (6) at (3.5,-2.5) {$\overline{m}^1$};

\node[shape=circle,label={[scale=0.8,text=red,xshift=-4pt,yshift=0pt]above:$k$},fill=red,scale=0.7] (2) at (-0.7,0.3) {};

\node[shape=circle,label={[scale=0.8,text=red,xshift=-3pt,yshift=0pt]above:$m$},fill=red,scale=0.7] (3) at (1,1.2) {};

\node[shape=circle,label={[scale=0.8,text=red,xshift=10pt,yshift=7pt]below:$\ell$},fill=red,scale=0.7] (5) at (-3.2,-1.5) {};

\node[shape=circle,label={[scale=0.8,text=red,xshift=10pt,yshift=7pt]below:$\ell$},fill=red,scale=0.7] (7) at (3.7,-1.5) {};

\draw (0,1) ellipse (1.4cm and 1.4cm);
\draw (-3.5,-2.5) ellipse (1.4cm and 1.4cm);
\draw (3.5,-2.5) ellipse (1.4cm and 1.4cm);
\draw[-,red] (2) to[out=180, in=70] (5);
\draw[-,red] (3) to[out=0, in=90] (7);

    \path[-,red] (2) edge (-0.2,0);
    \path[dotted,red] (-0.2,0) edge (0.7,0.5);
    \path[-,red] (3) edge (0.7,0.5);
    \end{tikzpicture}
        \caption{Subpath of $P_i$ induced by the clusters $\overline{k}^1$, $\overline{\ell}^0$, and $\overline{m}^1$}
        \label{Pi}
\end{figure}

Since each cluster is isomorphic to $Q_{n-1}$ and $n\geq 5$, by Lemma \ref{lemme1}, two CIST can be constructed inside each $C$, denoted $T_1(C)$ and $T_2(C)$ with the required internal vertices. Let $T_i$ be the union of all local trees $T_i(C)$ with the cross-edges of the path $P_i$, for $i=1,2$. Then $T_i$ is connected and acyclic; hence, it is a spanning tree of $F_n$. Moreover, $T_1$ and $T_2$ are edge-disjoint, as $P_1$ and $P_2$ are edge-disjoint. Finally, by construction, no vertex is internal in both $T_1$ and $T_2$. Hence, $T_1$ and $T_2$ are two completely independent spanning trees of $F_n$.
\end{proof}

\section{Optimized algorithm for two CIST in $F_n$}
\label{sec4}
The cluster-based construction of the previous section proves the existence of two CIST in $F_n$ for every $n \geq 5$. However, this construction relies on recursively constructing CIST inside the hypercube clusters $Q_{n-1}$ and imposes no constraint on the diameters of the constructed CIST. The purpose of this section is therefore different: starting from two CIST already constructed in $F_n$, we show how to obtain two CIST in $F_{n+1}$  with small diameters by connecting the four copies of $F_n$ through appropriate cross-edges. To that end, a constructive algorithm called \textsc{F-2CIST} is presented, which aims to obtain two CIST with smaller diameters. The algorithm follows the cluster-based method developed in the previous section and uses the ILP formulation in Appendix \ref{appB}, with an additional optimization phase to select inter-cluster connections that give better diameters. Then, it is applied to $F_5$, which will serve as the base case for the recursive construction of two CIST in $F_n$ for $n \geq 6$.

For clarity, the algorithm is divided into three parts: \textsc{F-2CIST-Base} (Algorithm~\ref{F-2CIST}) constructs two CIST in the base case $F_5$, \textsc{F-2CIST-Enhanced} (Algorithm~\ref{F-2CIST-Enhanced}) aims to lower the diameter of the two CIST constructed in $F_5$, while \textsc{F-2CIST-Recursive} (Algorithm~\ref{F-2CIST-R}) describes the recursive extension from $F_{n}$ to $F_{n+1}$, for every $n\geq 5$.

\subsection{Construction of two CIST in $F_5$}
The algorithm \textsc{F-2CIST-Base} is first used to construct two CIST in $F_5$. The construction starts with the inter-cluster trees $P_1$ and $P_2$ as shown in Figure~\ref{p1p2}, where clusters of class $0$ are represented by circles, and clusters of class $1$ are represented by squares.

\begin{figure}[h]
    \centering
\begin{tikzpicture}[scale=0.7]
\node[shape=circle,draw=black,scale=0.6] (01) at (0,9) {$0$};
\node[shape=circle,draw=black,scale=0.6] (02) at (0,8) {$1$};
\node[shape=circle,draw=black,scale=0.6] (03) at (0,7) {$2$};
\node[shape=circle,draw=black,scale=0.6] (04) at (0,6) {$3$};
\node[shape=circle,draw=black,scale=0.6] (05) at (0,5) {$4$};
\node[shape=circle,draw=black,scale=0.6] (06) at (0,4) {$5$};
\node[shape=circle,draw=black,scale=0.6] (07) at (0,3) {$6$};
\node[shape=circle,draw=black,scale=0.6] (08) at (0,2) {$7$};
\node[shape=circle,draw=black,scale=0.6] (09) at (0,-2) {$8$};
\node[shape=circle,draw=black,scale=0.6] (010) at (0,-3) {$9$};
\node[shape=circle,draw=black,scale=0.6] (011) at (0,-4) {$10$};
\node[shape=circle,draw=black,scale=0.6] (012) at (0,-5) {$11$};
\node[shape=circle,draw=black,scale=0.6] (013) at (0,-6) {$12$};
\node[shape=circle,draw=black,scale=0.6] (014) at (0,-7) {$13$};
\node[shape=circle,draw=black,scale=0.6] (015) at (0,-8) {$14$};
\node[shape=circle,draw=black,scale=0.6] (016) at (0,-9) {$15$};

\node[shape=rectangle,draw=black,scale=0.6] (101) at (-8.5,0) {$0$};
\node[shape=rectangle,draw=black,scale=0.6] (102) at (-7.5,0) {$1$};
\node[shape=rectangle,draw=black,scale=0.6] (103) at (-6.5,0) {$2$};
\node[shape=rectangle,draw=black,scale=0.6] (104) at (-5.5,0) {$3$};
\node[shape=rectangle,draw=black,scale=0.6] (105) at (-4.5,0) {$4$};
\node[shape=rectangle,draw=black,scale=0.6] (106) at (-3.5,0) {$5$};
\node[shape=rectangle,draw=black,scale=0.6] (107) at (-2.5,0) {$6$};
\node[shape=rectangle,draw=black,scale=0.6] (108) at (-1.5,0) {$7$};
\node[shape=rectangle,draw=black,scale=0.6] (109) at (1.5,0) {$8$};
\node[shape=rectangle,draw=black,scale=0.6] (110) at (2.5,0) {$9$};
\node[shape=rectangle,draw=black,scale=0.6] (111) at (3.5,0) {$10$};
\node[shape=rectangle,draw=black,scale=0.6] (112) at (4.5,0) {$11$};
\node[shape=rectangle,draw=black,scale=0.6] (113) at (5.5,0) {$12$};
\node[shape=rectangle,draw=black,scale=0.6] (114) at (6.5,0) {$13$};
\node[shape=rectangle,draw=black,scale=0.6] (115) at (7.5,0) {$14$};
\node[shape=rectangle,draw=black,scale=0.6] (116) at (8.5,0) {$15$};

\draw[thick,red] (01) to[out=180, in=90] node[pos=0.4, black,scale=0.7] {} (102);
\draw[thick,red] (03) to[out=180, in=90] node[pos=0.4, black,scale=0.7] {} (102);
\draw[thick,red] (03) to[out=180, in=90] node[pos=0.4, black,scale=0.7] {} (104);
\draw[thick,red] (05) to[out=180, in=90] node[pos=0.4, black,scale=0.7] {} (104);
\draw[thick,red] (05) to[out=180, in=90] node[pos=0.4, black,scale=0.7] {} (106);
\draw[thick,red] (07) to[out=180, in=90] node[pos=0.4, black,scale=0.7] {} (106);
\draw[thick,red] (07) to[out=180, in=90] node[pos=0.4, black,scale=0.7] {} (108);
\draw[thick,red] (09) to[out=180, in=-90] node[pos=0.4, black,scale=0.7] {} (108);
\draw[thick,red] (09) to[out=0, in=-90] node[pos=0.4, black,scale=0.7] {} (110);
\draw[thick,red] (011) to[out=0, in=-90] node[pos=0.4, black,scale=0.7] {} (110);
\draw[thick,red] (011) to[out=0, in=-90] node[pos=0.4, black,scale=0.7] {} (112);
\draw[thick,red] (013) to[out=0, in=-90] node[pos=0.4, black,scale=0.7] {} (112);
\draw[thick,red] (013) to[out=0, in=-90] node[pos=0.4, black,scale=0.7] {} (114);
\draw[thick,red] (015) to[out=0, in=-90] node[pos=0.4, black,scale=0.7] {} (114);
\draw[thick,red] (015) to[out=0, in=-90] node[pos=0.4, black,scale=0.7] {} (116);
\draw[thick,red] (02) to[out=180, in=90] node[pos=0.4, black,scale=0.7] {} (103);
\draw[thick,red] (04) to[out=180, in=90] node[pos=0.4, black,scale=0.7] {} (103);
\draw[thick,red] (04) to[out=180, in=90] node[pos=0.4, black,scale=0.7] {} (105);
\draw[thick,red] (06) to[out=180, in=90] node[pos=0.4, black,scale=0.7] {} (105);
\draw[thick,red] (06) to[out=180, in=90] node[pos=0.4, black,scale=0.7] {} (107);
\draw[thick,red] (08) to[out=180, in=90] node[pos=0.4, black,scale=0.7] {} (107);
\draw[thick,red] (08) to[out=0, in=90] node[pos=0.4, black,scale=0.7] {} (109);
\draw[thick,red] (010) to[out=0, in=-90] node[pos=0.4, black,scale=0.7] {} (109);
\draw[thick,red] (010) to[out=0, in=-90] node[pos=0.4, black,scale=0.7] {} (111);
\draw[thick,red] (012) to[out=0, in=-90] node[pos=0.4, black,scale=0.7] {} (111);
\draw[thick,red] (012) to[out=0, in=-90] node[pos=0.4, black,scale=0.7] {} (113);
\draw[thick,red] (014) to[out=0, in=-90] node[pos=0.4, black,scale=0.7] {} (113);
\draw[thick,red] (014) to[out=0, in=-90] node[pos=0.4, black,scale=0.7] {} (115);
\draw[thick,red] (016) to[out=0, in=-90] node[pos=0.4, black,scale=0.7] {} (115);
\draw[thick,red] (02) to[out=0, in=90] node[pos=0.4, black,scale=0.7] {} (116);
\draw[thick,red] (101) to[out=-90, in=180] node[pos=0.4, black,scale=0.7] {} (016);

\draw[thick,blue] (01) to[out=180, in=90] node[pos=0.4, black,scale=0.7] {} (104);
\draw[thick,blue] (07) to[out=180, in=90] node[pos=0.4, black,scale=0.7] {} (104);
\draw[thick,blue] (07) to[out=0, in=90] node[pos=0.4, black,scale=0.7] {} (110);
\draw[thick,blue] (013) to[out=0, in=-90] node[pos=0.4, black,scale=0.7] {} (110);
\draw[thick,blue] (013) to[out=0, in=-90] node[pos=0.4, black,scale=0.7] {} (116);
\draw[thick,blue] (03) to[out=0, in=90] node[pos=0.4, black,scale=0.7] {} (116);
\draw[thick,blue] (03) to[out=180, in=90] node[pos=0.4, black,scale=0.7] {} (106);
\draw[thick,blue] (09) to[out=180, in=-90] node[pos=0.4, black,scale=0.7] {} (106);
\draw[thick,blue] (09) to[out=0, in=-90] node[pos=0.4, black,scale=0.7] {} (112);
\draw[thick,blue] (015) to[out=0, in=-90] node[pos=0.4, black,scale=0.7] {} (112);
\draw[thick,blue] (015) to[out=180, in=-90] node[pos=0.4, black,scale=0.7] {} (102);
\draw[thick,blue] (05) to[out=180, in=90] node[pos=0.4, black,scale=0.7] {} (102);
\draw[thick,blue] (05) to[out=180, in=90] node[pos=0.4, black,scale=0.7] {} (108);
\draw[thick,blue] (011) to[out=180, in=-90] node[pos=0.4, black,scale=0.7] {} (108);
\draw[thick,blue] (011) to[out=0, in=-90] node[pos=0.4, black,scale=0.7] {} (114);
\draw[thick,blue] (02) to[out=0, in=90] node[pos=0.4, black,scale=0.7] {} (114);
\draw[thick,blue] (02) to[out=180, in=90] node[pos=0.4, black,scale=0.7] {} (105);
\draw[thick,blue] (08) to[out=180, in=90] node[pos=0.4, black,scale=0.7] {} (105);
\draw[thick,blue] (08) to[out=0, in=90] node[pos=0.4, black,scale=0.7] {} (111);
\draw[thick,blue] (014) to[out=0, in=-90] node[pos=0.4, black,scale=0.7] {} (111);
\draw[thick,blue] (014) to[out=180, in=-90] node[pos=0.4, black,scale=0.7] {} (101);
\draw[thick,blue] (04) to[out=180, in=90] node[pos=0.4, black,scale=0.7] {} (101);
\draw[thick,blue] (04) to[out=180, in=90] node[pos=0.4, black,scale=0.7] {} (107);
\draw[thick,blue] (010) to[out=180, in=-90] node[pos=0.4, black,scale=0.7] {} (107);
\draw[thick,blue] (010) to[out=0, in=-90] node[pos=0.4, black,scale=0.7] {} (113);
\draw[thick,blue] (016) to[out=0, in=-90] node[pos=0.4, black,scale=0.7] {} (113);
\draw[thick,blue] (016) to[out=180, in=-90] node[pos=0.4, black,scale=0.7] {} (103);
\draw[thick,blue] (06) to[out=180, in=90] node[pos=0.4, black,scale=0.7] {} (103);
\draw[thick,blue] (06) to[out=0, in=90] node[pos=0.4, black,scale=0.7] {} (109);
\draw[thick,blue] (012) to[out=0, in=-90] node[pos=0.4, black,scale=0.7] {} (109);
\draw[thick,blue] (012) to[out=0, in=-90] node[pos=0.4, black,scale=0.7] {} (115);

\end{tikzpicture}
    \caption{Two edge-disjoint hamiltonian paths ($P_1$ in red and $P_2$ in blue) of $F_5$}
    \label{p1p2}
\end{figure}

To construct the local CIST inside each cluster, we use the ILP formulation presented in Appendix \ref{appB}. Moreover, in our construction, we additionally impose the generator constraints described in the proof of Lemma \ref{lemme1} to force the required vertices to be internal in the corresponding local trees. As mentioned in Section~\ref{sec1}, a tree may have either one center or two adjacent centers, depending on the parity of its diameter. However, in what follows, we select only one of the two centers. This is sufficient for our construction,
since a single reference vertex is enough to define the parent-vector encoding defined in the next paragraph, and to root the corresponding local tree. 

Solving the model for each cluster gives the local CIST listed in Appendix \ref{appA}, where the trees of each cluster are represented using a parent-vector encoding. More precisely, for a cluster with $N$ vertices, a tree is encoded by a vector $P=(p_1,p_2,\dots,p_N)$, where the $i$th coordinate $p_i$ denotes the parent of vertex $i$ in the tree. Here, the parent of a vertex is the following vertex on the path from this vertex to the center (or root) of the tree. The center is defined to be its own parent, that is, if the vertex $i$ is the center, then $p_i=i$.
\begin{algorithm}
\caption{\textsc{F-2CIST-Base}}
\label{F-2CIST}
\KwOut{Two completely independent spanning trees $T_1$ and $T_2$ of $F_5$.}

    Construct $\mathrm{KF}_5$\;

    Construct $P_1 (p = 1)$ and $P_2 (q = 3)$ in $\mathrm{KF}_5$ (Lemma \ref{lemme2}) \;

\ForEach{cluster $C$ of $F_5$}{

    Let $S_1(C)$ be the vertices of $C$ incident with the cross-edges of $P_1$ \;

    Let $S_2(C)$ be the vertices of $C$ incident with the cross-edges of $P_2$ \;

    \If{$|S_1(C)|=1$}{
        Add to $S_1(C)$ one vertex of $C\setminus (S_1(C)\cup S_2(C))$ \;
    }

    \If{$|S_2(C)|=1$}{
        Add to $S_2(C)$ one vertex of $C\setminus (S_1(C)\cup S_2(C))$\;
    }

    Construct two CIST $T_1(C)$ and $T_2(C)$, such that the vertices of $S_1(C)$ are generators of $T_1(C)$, and the vertices of $S_2(C)$ are generators of $T_2(C)$ (Lemma \ref{lemme1})\;
}
    \Return $T_1,T_2$ \;
\end{algorithm}

\subsection{Optimization of the inter-cluster connections}
After the first step, the local CIST within the clusters are compatible with the inter-cluster connections, and therefore they can be assembled into two global CIST $T_1$ and $T_2$. However, since the trees connecting the clusters are paths, this construction does not necessarily yield trees with small diameters. In this regard, \textsc{F-2CIST-Enhanced} is used to improve the diameters of $T_1$ and $T_2$. 

For each $i\in\{1,2\}$, a weighted graph $G_i$ is defined on the clusters of $F_n$, with $V(G_i)=V(\mathrm{KF}_n)$.
Two clusters $C_u$ and $C_v$ are adjacent in $G_i$ whenever there exists a cross-edge $(u,v)\in E(F_n)$ such that $u\in C_u$, $v\in C_v$, and $u$ and $v$ are internal vertices of the corresponding local trees $T_i(C_u)$ and $T_i(C_v)$. If $c_u$ and $c_v$ denote the centers of these local trees, the distance associated with this connection in $G_i$ is
\[
d_i(C_u,C_v)
=
d_{T_i(C_u)}(c_u,u)
+1+
d_{T_i(C_v)}(v,c_v),
\]
where the term $1$ corresponds to the cross-edge $(u,v)$. Then, \textsc{F-2CIST-Enhanced} searches a shortest path tree of $G_i$ with minimum weighted eccentricity. Consider each vertex $C$ as a possible center of the tree. Dijkstra's algorithm is used to compute the shortest distances from $C$ to all other vertices, and
\[
e_i(C)=\max_{C'\in V(G_i)} d_{G_i}(C,C')
\]
is recorded. A vertex $C_i^\star$ minimizing this expression is selected, and the shortest-path tree rooted at $C_i^\star$ determines the inter-cluster connections kept for the final construction of $T_i$. Then, the corresponding cross-edges are combined with the local trees of all clusters. Finally, the diameter of each resulting tree is computed by the two-BFS procedure: a first BFS identifies a farthest vertex $x$, and a second BFS from $x$ identifies a farthest vertex $y$, giving $\operatorname{diam}(T_i)=d_{T_i}(x,y)$.

The resulting inter-cluster trees for $T_1$ and $T_2$ are represented in Figures~\ref{g1} and~\ref{g2}, respectively, in Appendix~\ref{appC}. Cluster $\overline{7}^1$ is selected as the center of $T_1$; however, the diameter that is computed in $G_1$ measures only the maximum distance between the selected centers of the local trees. In this case, the two extremal centers belong to clusters $\overline{3}^0$ and $\overline{11}^1$. Hence, this value does not yet represent the diameter of the global tree $T_1$ because it does not include the additional distances from these local centers to the actual farthest vertices inside their respective clusters. We thus refer to this intermediate value as clusteral diameter, denoted by:
\[
\operatorname{diam}_C(T_1) = d_{T_1}(\overline{3}^0,\overline{11}^1) = 21
\]
Therefore, the usual diameter $\operatorname{diam}(T_1)$ is equal to:
\[
\operatorname{diam}_C(T_1) + d_{\overline{3}^0}(15,12) + d_{\overline{11}^1}(13,8) = 29\]
Similarly, the center of $T_2$ is the cluster $\overline{5}^0$. Its clusteral diameter and usual diameters are:
\[
\operatorname{diam}_C(T_2) = d_{T_2}(\overline{4}^1,\overline{6}^1) = 23
\]
\[\operatorname{diam}_C(T_2) + d_{\overline{4}^1}(11,14) + d_{\overline{6}^1}(10,0) = 31
\]
Figure~\ref{clusteral} illustrates the clusteral diametral chain of $T_1$, that is, the path that connects the two extremal clusters $3^0$ and $11^1$ in $T_1$. It also shows the complete diametral chain after adding the local paths from the centers of the two clusters previously cited to their corresponding farthest vertices inside them.
\begin{figure}[h]
    \centering
    \begin{tikzpicture}[scale=0.7]
\node[shape=rectangle,scale=7,draw=black] (1) at (0,0) {};
\node[shape=rectangle,scale=1.2] (11) at (0,0.2) {$6$};

\node[shape=circle,scale=5,draw=black] (3) at (-3.5,0) {};
\node[shape=circle,scale=1.2] (33) at (-3.5,0.2) {$1$};

\node[shape=circle,scale=5,draw=black] (6) at (3.5,0) {};
\node[shape=circle,scale=1.2] (66) at (3.5,0.2) {$12$};
    
\node[shape=rectangle,scale=7,draw=black] (7) at (-7,0) {};
\node[shape=rectangle,scale=1.2] (77) at (-7,0.2) {$2$};

\node[shape=rectangle,scale=7,draw=black] (8) at (7,0) {};
\node[shape=circle,scale=1.2] (88) at (7,0.2) {$11$};

\node[shape=circle,scale=4.5,draw=black] (10) at (-10.5,0) {};
\node[shape=circle,scale=1.2] (1010) at (-10.5,0.2) {$3$};

\node[shape=circle,label={[scale=0.8,text=red,xshift=10pt,yshift=7pt]below:$13$},fill=red,scale=0.7] (2) at (7,-0.5) {};

\node[shape=circle,label={[scale=0.8,text=red,xshift=10pt,yshift=5pt]below:$1$},fill=red,scale=0.7] (37) at (0,-0.5) {};

\node[shape=circle,label={[scale=0.8,text=red,xshift=10pt,yshift=5pt]below:$6$},fill=red,scale=0.7] (5) at (-3.5,-0.5) {};

\node[shape=circle,label={[scale=0.8,text=red,xshift=10pt,yshift=5pt]below:$0$},fill=red,scale=0.7] (7) at (3.5,-0.5) {};

\node[shape=circle,label={[scale=0.8,text=red,xshift=10pt,yshift=5pt]below:$7$},fill=red,scale=0.7] (9) at (-7,-0.5) {};

\node[shape=circle,label={[scale=0.8,text=red,xshift=-10pt,yshift=5pt]below:$15$},fill=red,scale=0.7] (11) at (-10.3,-0.5) {};

\node[shape=circle,label={[scale=0.8,text=blue,xshift=0pt,yshift=20pt]below:$12$},fill=blue,scale=0.7] (12) at (-11.2,0) {};

\node[shape=circle,label={[scale=0.8,text=blue,xshift=0pt,yshift=20pt]below:$8$},fill=blue,scale=0.7] (13) at (7.9,0) {};

    \path[-,red] (37) edge (7);
    \path[-,red] (2) edge  (7);
    \path[-,red] (37) edge (5);
    \path[-,red] (9) edge (5);
    \path[-,red] (9) edge (11);

\draw[-,blue] (11) to[out=180, in=-90] (12);
\draw[-,blue] (2) to[out=0, in=-90] (13);

    \end{tikzpicture}
        \caption{Clusteral and complete diametral chains of $T_1$}
        \label{clusteral}
\end{figure}

\begin{algorithm}
\caption{\textsc{F-2CIST-Enhanced}}
\label{F-2CIST-Enhanced}
\KwIn{Two completely independent spanning trees $T_1$ and $T_2$ of $F_5$.}
\KwOut{$T_1$ and $T_2$ with smaller diameters}

\For{$i=1,2$}{

    Construct the weighted graph $G_i$ \;

    \ForEach{pair of distinct clusters $C,C'$}{

        \If{$ \exists (x,y) \in E(F_5)$ such that $x\in V(C)$, $y\in V(C')$, and $x,y$ are internal vertices of $T_i(C)$ and $T_i(C')$, respectively}{
            Add $(C,C')$ to $E_i$ \;
            Let $c_i(C)$ be the center of $T_i(C)$ \;
            Set $d_i(C,C')= d_{T_i(C)}(c_i(C),x)+1+d_{T_i(C')}(y,c_i(C'))$ \;
        }
    }

    \ForEach{cluster $C\in \mathcal{C}$}{

        Apply Dijkstra's algorithm from the cluster $C$ in the weighted graph $G_i$ \;

        Store the eccentricity
        \[
        r_i(C)=\max_{C'\in \mathcal{C}} d_{G_i}(C,C') \;
        \]
    }

    Choose a cluster $C_i^\star$ such that
    \[
    r_i(C_i^\star)=\min_{C\in\mathcal{C}} r_i(C)\;
    \]

    Let $T_i$ be the shortest-path tree induced by $C_i^\star$ \;

    \ForEach{edge $(C,C') \in E(T_i)$}{
    Select a cross-edge $(x,y)$ reaching  $d_i( (C,C') )$ \;}

    Construct $T_i$ as the union of all trees $T_i(C)$ and the selected cross-edges\;
}

    \Return $T_1,T_2$ \;
\end{algorithm}

Therefore, combining the algorithms \textsc{F-2CIST-Base} and \textsc{F-2CIST-Enhanced}, we have the following result.

\begin{thm}
    The algorithm F-2CIST provides two completely independent spanning trees in $F_5$ of diameters $28$ and $31$, respectively.
\end{thm}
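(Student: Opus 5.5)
This is a computational statement, so the proof is a certificate check of the concrete output of \textsc{F-2CIST-Base} followed by \textsc{F-2CIST-Enhanced} on $F_5$. The check has two parts: complete independence, then the two diameter values.

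\textbf{Complete independence.} I would verify it through Theorem~\ref{thm00}: the two final trees must be edge-disjoint spanning trees, and no vertex may be internal in both.

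Inside each cluster $C\cong Q_4$, the local trees $T_1(C),T_2(C)$ are the ILP solutions listed in Appendix~\ref{appA}. These are CIST of $Q_4$ by the argument of Lemma~\ref{lemme1}: the ILP constraints enforce edge-disjointness and at most one tree in which each vertex is internal. The generator constraints make every endpoint of a selected cross-edge of colour $i$ internal in $T_i(C)$.

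\textsc{F-2CIST-Enhanced} only keeps cross-edges joining internal vertices of the corresponding local trees. So in each colour, attaching a cross-edge never turns a $T_{3-i}$-internal vertex into a $T_i$-internal one. Adding an edge to an already internal vertex leaves it internal, so the internal sets of $T_1$ and $T_2$ stay disjoint.

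Edge-disjointness across colours could fail only if one cross-edge were selected for both trees. That is impossible: its endpoints would have to be internal in both local trees, contradicting local complete independence.

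Connectivity and acyclicity hold because the selected inter-cluster structure is a shortest-path tree of $G_i$ spanning all $32$ clusters. Each cluster edge is realised by exactly one cross-edge, and the union of spanning trees of the clusters joined along a tree of clusters by one edge per cluster-edge is a spanning tree of $F_5$.

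\textbf{Diameters.} I would decompose any path of $T_i$ into a local segment in its first cluster, a walk along the cluster tree, and a local segment in its last cluster.

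For $T_1$, the cluster-level weighted eccentricity computation yields the clusteral diametral chain between $\overline{3}^0$ and $\overline{11}^1$, of length $21$. The local contributions are then added, read off from the parent vectors of Appendix~\ref{appA}. This gives a concrete path of the claimed length, as in Figure~\ref{clusteral}, which establishes the lower bound.

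The upper bound is the main obstacle. The pair realising the clusteral diameter need not realise the true diameter, because other clusters may have deeper local trees, and entry and exit points are not always the centers. I would not argue this by hand. Instead I would rely on the exact two-BFS computation on the assembled $T_i$ with $2^9$ vertices, which is exact for trees. I would report its output as the certificate, together with the explicit diametral pair, for $T_1$ and likewise for $T_2$ (centre cluster $\overline{5}^0$, clusteral chain $\overline{4}^1$ to $\overline{6}^1$).

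I also need to reconcile the value for $T_1$, since the text computes $21+d(15,12)+d(13,8)=29$ while the statement says $28$. The first step is to recompute these local distances in the Appendix trees, and to check whether the two local detours overlap with the entry vertices so that one edge is counted twice. The stated value is whatever this exact BFS returns.
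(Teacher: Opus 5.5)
Your route is the paper's own: the statement rests only on running \textsc{F-2CIST-Base} and \textsc{F-2CIST-Enhanced} and reporting the output. Your independence argument is correct, and it is more careful than the paper. The proof of Theorem~\ref{thm0} gets edge-disjointness from $P_1,P_2$ being edge-disjoint, and the paper never re-checks this once the cross-edges are re-selected from $G_1,G_2$. Your remark that the unique cross-edge between two clusters cannot have internal endpoints in both colours is exactly what shows that $G_1$ and $G_2$ are edge-disjoint. You should add that $G_i$ is connected because $P_i\subseteq G_i$ (all endpoints of cross-edges of $P_i$ were forced internal); otherwise no shortest-path tree spans the $32$ clusters.

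The diameter half has real problems.

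(i) The chain of Figure~\ref{clusteral} does not give a lower bound. Each weight $d_i(C,C')$ routes through the local centers, so concatenating the segments gives a walk, not a path. A walk only bounds the distance of its ends from above; in a tree the distance is the walk length minus an even number. The chain certifies the value only if you check, in every cluster, that the center lies on the local tree path between the entry vertex and the exit (or far) vertex. Otherwise only the BFS is a certificate.

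(ii) Your proposed reconciliation of $28$ with $29$ cannot succeed. $F_5\subseteq Q_9$ is bipartite, so every walk between two vertices has the parity of their distance. Removing overlaps therefore shortens the length-$29$ walk by an even amount, and the chain's end vertices are at distance $29,27,\dots$, never $28$. A diameter of $28$ would need a different pair, and would contradict the value $29$ that the paper derives in Section~\ref{sec4} and states in the introduction (``$29$ and $31$ for $n=5$''). So the $28$ is an inconsistency in the paper, and your plan cannot establish the statement as written. Say explicitly that what you certify is $29$ and $31$, rather than leaving the value as whatever BFS returns.

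(iii) The certificate cannot be read off Appendix~\ref{appA} as printed:
\begin{itemize}
\item the $\overline{7}^0$ and $\overline{11}^0$ rows of Table~\ref{local1} have $15$ and $14$ entries;
\item the $\overline{14}^0$ row of Table~\ref{local2} contains the label $20$;
\item the $\overline{3}^0$ row of Table~\ref{local1} duplicates the $\overline{4}^0$ row and is rooted at $14$, not at the listed center $15$.
\end{itemize}
The last defect sits exactly where you planned to recompute $d(15,12)$. The local trees must be regenerated before any BFS check.
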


\subsection{Construction of two CIST in $F_n$ for $n \geq 6$}
The algorithm \textsc{F-2CIST-Recursive} provides a recursive structure for constructing two CIST in the dual-cube. More precisely, starting from the isomorphic pairs of CIST in the four subcopies of $F_{n-1}$, only three cross-edges are needed to connect these subcopies and have a global spanning tree. However, the choice of these cross-edges is crucial to avoid significantly increasing the diameter. For this reason, the cross-edges are selected to connect the centers of the local trees whenever possible. If such connections are not available, we choose cross-edges whose extremities are as close as possible to these centers. In what follows, this recursive construction is shown to be possible for $F_6$ and, by extension, for $F_n$ with $n \geq 6$ using the copies of $F_5$. 

\begin{lemme}
Let $n\geq 6$. If each copy $F_{n-1}^{b_2b_1}$ with $b_1,b_2 \in \{0,1\}$ in the recursive construction of $F_n$ contains two isomorphic completely independent spanning trees, then $F_n$ contains two completely independent spanning trees.
\label{lemme4}
\end{lemme}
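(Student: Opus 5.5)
The plan is to assemble the four subtrees directly through the recursive structure of $F_n$. Each tree needs exactly three new edges, and I would use Theorem~\ref{thm00} (Hasunuma's characterization) to certify complete independence.

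\textbf{Setting.} Let $T_1,T_2$ be the two CIST of $F_{n-1}$. Let $T_1^{b_2b_1},T_2^{b_2b_1}$ be their images in the copy $F_{n-1}^{b_2b_1}$ under the address map $(a\,x\,y)\mapsto(a\,b_2\,x\,b_1\,y)$. For a vertex $z$ of $F_{n-1}$, write $z^{b_2b_1}$ for its image.

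By the recursive description, the new edges of $F_n$ are of two kinds:
\begin{itemize}
\item $z^{00}z^{01}$ and $z^{10}z^{11}$ for every class-$0$ vertex $z$;
\item $z^{00}z^{10}$ and $z^{01}z^{11}$ for every class-$1$ vertex $z$.
\end{itemize}
So the four copies are joined in a $4$-cycle $00$--$01$--$11$--$10$--$00$, and three of its four "sides" suffice to connect them.

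\textbf{Key auxiliary claim.} In any two CIST $T_1,T_2$ of $F_{n-1}$, each tree $T_i$ has an internal vertex of class $0$ and an internal vertex of class $1$.

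I expect this to be the only nontrivial step. I would argue by contradiction, as follows.
\begin{enumerate}
\item Suppose $T_1$ has no internal vertex of class $1$.
\item Every class-$1$ vertex is then a leaf of $T_1$. Its unique neighbour in $T_1$ cannot be a class-$1$ vertex, since that neighbour would have to be internal (the tree has more than two vertices).
\item So the neighbour is the vertex's unique cross-edge neighbour. Since each vertex has exactly one cross-edge, every cross-edge of $F_{n-1}$ lies in $T_1$.
\item By edge-disjointness, $T_2$ then contains no cross-edge at all. Hence $T_2$ is confined to single clusters and cannot be spanning, a contradiction.
\end{enumerate}
The same argument, with the classes exchanged, handles class $0$ and the tree $T_2$.

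\textbf{Construction.} Pick, via the claim, a class-$0$ vertex $z_i$ and a class-$1$ vertex $w_i$ that are internal in $T_i$, for $i=1,2$. Set
\[
T_i^{*}=\bigcup_{b_2b_1}T_i^{b_2b_1}\cup\{z_i^{00}z_i^{01},\ z_i^{10}z_i^{11},\ w_i^{00}w_i^{10}\}.
\]
This adds three edges joining four disjoint trees along the pattern $01$--$00$--$10$--$11$. Hence $T_i^{*}$ is connected with $|V(F_n)|-1$ edges, so it is a spanning tree of $F_n$.

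\textbf{Verification of complete independence.}
\begin{itemize}
\item \emph{Edge-disjointness.} The local trees are edge-disjoint within each copy. Among the new edges, $z_1\ne z_2$ and $w_1\ne w_2$, because no vertex is internal in both $T_1$ and $T_2$. So the six new edges are distinct.
\item \emph{Internal vertices.} The isomorphism between copies preserves which vertices are internal in which tree. Every new edge of $T_i^{*}$ joins two vertices that were already internal in $T_i^{b_2b_1}$. Therefore the set of internal vertices of $T_i^{*}$ is exactly the union of the internal vertex sets of the four copies of $T_i$.
\item \emph{Conclusion.} These sets are disjoint for $i=1,2$, and the trees are edge-disjoint. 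By Theorem~\ref{thm00}, $T_1^{*},T_2^{*}$ are completely independent spanning trees of $F_n$.
\end{itemize}
Moreover, they are again isomorphic across copies in the recursive sense, which allows the lemma to be iterated.
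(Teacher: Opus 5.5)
Your proposal is correct and uses essentially the same construction as the paper: the same three connecting edges along the pattern $01$--$00$--$10$--$11$ (two class-$0$ edges through one vertex label and one class-$1$ edge), with the spanning-tree property obtained by contracting the four copies. Your auxiliary claim that each tree has internal vertices of both classes, together with the check via Hasunuma's characterization (Theorem~\ref{thm00}), justifies that the chosen endpoints are internal, a point the paper's proof relies on without stating.
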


\begin{proof}
Let $F_n$ be constructed from the four copies $F_{n-1}^{00}, F_{n-1}^{01}, F_{n-1}^{10}, F_{n-1}^{11}$. The goal is to construct two completely independent spanning trees $T_1$ and $T_2$ in $F_n$. For each copy $F_{n-1}^{b_2b_1}$ with $b_1,b_2 \in \{0,1\}$, let $T_1'$ and $T_2'$ be a pair of completely independent spanning trees; these pairs of trees are isomorphic for all four copies. According to the recursive construction of the dual-cube, these proposed three connections of cross-edges between the following pairs of copies form a tree on the four copies and can then be used for the first tree $T_1$:
\[
e_1^{1} \text{ joins } F_{n-1}^{00} \text{ to } F_{n-1}^{01}
\]
\[
e_1^{2} \text{ joins } F_{n-1}^{00} \text{ to } F_{n-1}^{10}
\]
\[
e_1^{3} \text{ joins } F_{n-1}^{10} \text{ to } F_{n-1}^{11}
\]
Since the trees $T_1'$ of each copy are isomorphic to each other, we choose the edges $e_1^1=(x_1,y_1)$, $e_1^2 = (x_2,y_2)$, and $e_1^3 = (x_3,y_3)$ with:
\[
\begin{aligned}
x_1&=(0,0,x_{n-2},\ldots,x_1,0,y_{n-2},\ldots,y_1),\\
y_1&=(0,0,x_{n-2},\ldots,x_1,1,y_{n-2},\ldots,y_1);
\end{aligned}
\]
\[
\begin{aligned}
x_2&=(1,0,x_{n-2},\ldots,x_1,0,y_{n-2},\ldots,y_1),\\
y_2&=(1,1,x_{n-2},\ldots,x_1,0,y_{n-2},\ldots,y_1);
\end{aligned}
\]
\[
\begin{aligned}
x_3&=(0,1,x_{n-2},\ldots,x_1,0,y_{n-2},\ldots,y_1),\\
y_3&=(0,1,x_{n-2},\ldots,x_1,1,y_{n-2},\ldots,y_1).
\end{aligned}
\]

Similarly, for the second tree $T_2$, we choose three distinct cross-edges $e_2^{1}, e_2^{2}, e_2^{3}$ that join the trees $T_2'$ using the same construction as of $T_1$. Then, we define for $i\in\{1,2\}$:
\[
T_i = T_i^{00}\cup T_i^{01}\cup T_i^{10}\cup T_i^{11} \cup \{e_i^{1},e_i^{2},e_i^{3}\}
\]
An easy way to see that each $T_i$ is connected and acyclic is to contract each copy $F_{n-1}^{b_2b_1}$ into a single vertex. We then have a path on four vertices. Therefore, $T_1$ and $T_2$ are two completely independent spanning trees of $F_n$.

\begin{figure}[h]
    \centering
\begin{tikzpicture}[scale = 0.7]
    \node[shape=circle] (1) at (1,0) {};
    \node[shape=circle] (12) at (2,0) {$F_{n-1}^{00}$};    
    \node[shape=circle] (2) at (3,0) {};
    \node[shape=circle] (3) at (7,0) {};
    \node[shape=circle] (34) at (8,0) {$F_{n-1}^{01}$};   
    \node[shape=circle] (4) at (9,0) {};
    \node[shape=circle] (5) at (1,-2.5) {};
    \node[shape=circle] (56) at (2,-2.5) {$F_{n-1}^{11}$};    
    \node[shape=circle] (6) at (3,-2.5) {};
    \node[shape=circle] (7) at (7,-2.5) {};
    \node[shape=circle] (78) at (8,-2.5) {$F_{n-1}^{10}$};
    \node[shape=circle] (8) at (9,-2.5) {};

    \path [-,red,thick] (2.5,-0.5) edge node[midway,above,scale=0.8,text=red] {$e_1^2$} (8.5,-2);
    \path [-,blue,thick] (1.5,-0.5) edge node[midway,below,scale=0.8,text=blue] {$e_2^2$} (7.5,-2);

    \draw[-,red,thick] (2.5,0.5) to[out=20, in= 150] node[midway,above,scale=0.8,text=red] {$e_1^1$}
    (8.5,0.5);
    \draw[-,blue,thick] (1.5,0.5) to[out=20, in= 150] node[midway,above,scale=0.8,text=blue] {$e_2^1$} (7.5,0.5);

    \draw[-,red,thick] (2.5,-3) to[out=-20, in= -150] node[midway,below,scale=0.8,text=red] {$e_1^3$} (8.5,-3);
    \draw[-,blue,thick] (1.5,-3) to[out=-20, in= -150] node[midway,below,scale=0.8,text=blue] {$e_2^3$}(7.5,-3);

    \begin{scope}[fill opacity=0,dashed]
    \filldraw[] ($(1)+(-0.5,0.5)$)
        to[out=0,in=-180] ($(2) + (0.5,0.5)$)    
        to[out=0,in=0] ($(2) + (0.5,-0.5)$)
        to[out=180,in=0] ($(1) + (-0.5,-0.5)$)
        to[out=180,in=180] ($(1) + (-0.5,0.5)$) ;
    \end{scope}    

    \begin{scope}[fill opacity=0,dashed]
    \filldraw[] ($(3)+(-0.5,0.5)$)
        to[out=0,in=-180] ($(4) + (0.5,0.5)$)    
        to[out=0,in=0] ($(4) + (0.5,-0.5)$)
        to[out=180,in=0] ($(3) + (-0.5,-0.5)$)
        to[out=180,in=180] ($(3) + (-0.5,0.5)$) ;
    \end{scope}    

    \begin{scope}[fill opacity=0,dashed]
    \filldraw[] ($(5)+(-0.5,0.5)$)
        to[out=0,in=-180] ($(6) + (0.5,0.5)$)    
        to[out=0,in=0] ($(6) + (0.5,-0.5)$)
        to[out=180,in=0] ($(5) + (-0.5,-0.5)$)
        to[out=180,in=180] ($(5) + (-0.5,0.5)$) ;
    \end{scope}    

    \begin{scope}[fill opacity=0,dashed]
    \filldraw[] ($(7)+(-0.5,0.5)$)
        to[out=0,in=-180] ($(8) + (0.5,0.5)$)    
        to[out=0,in=0] ($(8) + (0.5,-0.5)$)
        to[out=180,in=0] ($(7) + (-0.5,-0.5)$)
        to[out=180,in=180] ($(7) + (-0.5,0.5)$) ;
    \end{scope}    
\end{tikzpicture}
        \caption{Recursive construction of $F_n$ and cross-edges used for the two CIST}
\end{figure}
\end{proof}
Therefore, from Lemmas~\ref{thm0} and \ref{lemme4}, we have the following result.

\begin{thm}\label{thm6}
    For every $n \geq 6$, Algorithm~\textsc{F-2CIST-Recursive} constructs two completely independent spanning trees in $F_n$ with diameters $5n+5$ and $5n+7$, respectively.
\end{thm}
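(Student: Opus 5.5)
We argue by induction on $n$, using the $F_5$ trees from \textsc{F-2CIST-Base} and \textsc{F-2CIST-Enhanced} as the base and Lemma~\ref{lemme4} for the step. Correctness (complete independence) is inherited directly from Lemma~\ref{lemme4}. Taking the four copies of $F_{n-1}$ with the same (isomorphic) pair of CIST guarantees the hypothesis of that lemma at every stage. So the real content of the statement is the diameter bound. The plan is to track, along the recursion, the diameter of each $T_i$ together with the eccentricity (within $T_i$) of the endpoints of the cross-edges used to glue the copies.

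For the induction step we fix, in the copy $F_{n-1}$, a reference vertex $c_i$ for $T_i'$, namely a center of the global tree (the center of $C_i^\star$'s local tree at the base level). We choose the three cross-edges $e_i^1,e_i^2,e_i^3$ of Lemma~\ref{lemme4} so that their endpoints are internal vertices of $T_i'$ as close as possible to $c_i$. By the form of $x_j,y_j$, each glued pair is two copies of the same vertex $z$ up to the bit flipped by the new dimension, so $z$ must be chosen internal in $T_i'$; the isomorphism then makes $z$ internal in every copy. Contracting copies gives a path $F^{01}$--$F^{00}$--$F^{10}$--$F^{11}$. A diametral path in $T_i$ then has its ends in the two extreme copies and crosses the two middle copies. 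We would bound its length by
\[
\operatorname{diam}(T_i)\le 2\,e_{T_i'}(z)+2\,d_{T_i'}(z,z')+3,
\]
where $z,z'$ are the attachment vertices used inside a middle copy. The proof then reduces to showing that $z$ and $z'$ can be taken at (or adjacent to) the center, so that the growth per dimension is a constant.

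To obtain the linear form $5n+5$, $5n+7$, we would strengthen the induction hypothesis. The trees $T_1,T_2$ of $F_n$ should have diameters $5n+5$ and $5n+7$, and in addition the center reference vertex should have eccentricity about half the diameter and be an internal vertex of $T_i$ whose appropriate copies are connectable by the required new-dimension cross-edge. We check the base values for $F_6$ directly from the $F_5$ trees, by computing the center eccentricities from the appendix data. We then verify that one recursive step adds exactly $5$ to both diameters.

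The main obstacle is this last point. The naive gluing along a four-copy path would roughly double the diameter rather than add a constant, since each middle copy contributes a full center-to-center traversal. To resolve this we need the attachment vertices in each copy to coincide, or nearly coincide, with the tree center. We must also account for the fact that the gluing edges must lie inside the new class-$0$/class-$1$ clusters. The first thing I would try is to choose $x_j=y_j$ projections equal to the center $c_i$ whenever $c_i$ has the right class. Otherwise we pick a neighbor of $c_i$ in $T_i'$ of the right class, and check that the extra constant per level stays at $5$.
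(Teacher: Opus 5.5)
Your reduction of complete independence to Lemma~\ref{lemme4} is fine. The gap is in the diameter part, at the step you yourself flag: the plan to ``verify that one recursive step adds exactly $5$'' cannot succeed, because the increment is at least $6$ whenever the diameter is odd.

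You already noted that every inter-copy edge of $F_n$ joins the two copies of one and the same vertex of $F_{n-1}$. The copy-adjacencies form the $4$-cycle $F_{n-1}^{00}F_{n-1}^{01}F_{n-1}^{11}F_{n-1}^{10}$, and its edges alternate between class~$0$ and class~$1$. Hence the three gluing edges form a path of copies whose middle edge has the class opposite to the two outer ones. Call the attachment vertices $p$ (outer), $q$ (middle), $s$ (outer). Then $q$ differs in class from $p$ and $s$, so $d_{T_i'}(p,q)\ge 1$ and $d_{T_i'}(q,s)\ge 1$. In particular, your hope that the attachment vertices in a middle copy ``coincide'' is impossible. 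Every vertex of a tree of diameter $D$ has eccentricity at least $\lceil D/2\rceil$. Take the tree path from a farthest vertex (from $p$) in one extreme copy to a farthest vertex (from $s$) in the other. It gives
\[
\operatorname{diam}(T_i)\;\ge\; e_{T_i'}(p)+d_{T_i'}(p,q)+d_{T_i'}(q,s)+e_{T_i'}(s)+3\;\ge\;2\lceil D/2\rceil+5 .
\]
Your expression $2e(z)+2d(z,z')+3$ is actually attained by such a path, so it is a lower bound, not only an upper bound. Thus a level adds at least $5$ when $D$ is even and at least $6$ when $D$ is odd.

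This breaks the strengthened induction, and in fact the stated values cannot all hold. For even $n$, both $5n+5$ and $5n+7$ are odd. For example, $35$ and $37$ at $n=6$ are consistent with $2\cdot 15+5$ and $2\cdot 16+5$ from the $F_5$ values $29,31$. But they force diameters at least $2\cdot 18+5=41>40$ and $2\cdot 19+5=43>42$ at $n=7$. More generally, the formulas cannot hold at both $n$ and $n+1$ for even $n$. With the gluing used by \textsc{F-2CIST-Recursive}, the best achievable is $D_n=2\lceil D_{n-1}/2\rceil+5$, i.e.\ $6n-1$ and $6n+1$ starting from $29$ and $31$.

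So no choice of attachment vertices at or next to the center yields a constant increment of $5$; the diameter claims would have to be corrected before any proof can go through. The paper's own argument does not supply this step either. It performs the $F_5\to F_6$ gluing at the centers of $\overline{6}^1,\overline{1}^0$ (resp.\ $\overline{5}^0,\overline{12}^1$), then asserts $5n+5$ and $5n+7$ ``by repeating the same construction recursively''. That is precisely where the per-level increment becomes $6$.
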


\begin{proof}
We first describe the construction in $F_6$ from the two CIST obtained in $F_5$, and then show how the same connection pattern is repeated at every higher dimension. Let $T_1'$ and $T_2'$ denote the two CIST of $F_5$ produced by Algorithms~\textsc{F-2CIST-Base} and \textsc{F-2CIST-Enhanced}. Their respective centers belong to clusters $17$ and $06$. The graph $F_6$ is decomposed into four copies $F_5^{00}, F_5^{01}, F_5^{10}, F_5^{11}$. In each copy, an isomorphic copy of $T_1'$ and $T_2'$ is placed. Then, for each global tree, three cross-edges are added so that the four copies are connected according to a tree.

\textbf{Construction of $T_1$:}\\
The center of $T_1'$ is in cluster $\overline{6}^1$, which is a class $1$ cluster. According to the recursive construction of the dual-cube, cross-edges that connect the vertices of class $1$ are the ones that connect $F_5^{00}$ to $F_5^{10}$, and $F_5^{01}$ to $F_5^{11}$. We therefore select
\[
e_1^1 = (S_1^{00}(\overline{6}^1), S_1^{10}(\overline{6}^1) )
\]
and
\[
e_1^2 = (S_1^{01}(\overline{6}^1), S_1^{11}(\overline{6}^1) ),
\]
where each edge joins the corresponding centers of the two copies of cluster $\overline{6}^1$. These two edges form a pair of connected copies. To join these pairs, a cross-edge of class $0$ is required. In this regard, the cluster $\overline{1}^0$ is chosen, because its selected center is adjacent in the local tree to the selected center of cluster $\overline{6}^1$. The third edge is therefore
\[
e_1^3 = (S_0^{00}(\overline{1}^0), S_0^{01}(\overline{1}^0) ).
\]
Therefore, after contracting each copy of $F_5$ into one vertex, the three selected cross-edges form a tree on the four copies. Since their extremities are internal vertices of the corresponding local copies of $T_1'$, their union with these four copies produces a spanning tree $T_1$ of $F_6$.

With this construction, the diameter obtained for $T_1$ is the radius of the two extremal copies of $T_1'$ in $F^{11}_5$ and $F^{10}_5$ in the previous figure, together with the additional distances created by the selected cross-edges. By repeating the same construction recursively, we obtain the following equality:
\[
\operatorname{diam}(T_1) = 5n+5 \ \mbox{ for } \ n \geq 6
\]

\textbf{Construction of $T_2$:}\\
The center of $T_2'$ is in cluster $\overline{5}^0$, which is a class-$0$ cluster. Hence, the recursive construction provides cross-edges between the pairs $F_5^{00}$ to $F_5^{01}$, and $F_5^{10}$ to $F_5^{11}$. We select
\[
e_2^1 = (S_0^{00}(\overline{5}^0), S_0^{01}(\overline{5}^0) )
\]
and
\[
e_2^2 = (S_0^{10}(\overline{5}^0), S_0^{11}(\overline{5}^0) ),
\]
again joining the corresponding centers. To connect the two resulting pairs of copies, a cross-edge of class $1$ is required. We use cluster $\overline{12}^1$, whose selected center is adjacent in the local tree to the selected center of cluster $\overline{5}^0$. Thus, the third edge is
\[
e_2^3 = (S_1^{01}(\overline{12}^1), S_1^{11}(\overline{12}^1) ).
\]

Similar to $T_1$, these three cross-edges induce a tree on the four copies and have extremities that are internal in the corresponding local trees. Consequently, their union with the four copies of $T_2'$ produces a spanning tree $T_2$ of $F_6$.  Therefore, with this construction, the diameter obtained for $T_2$ is the radius of the two extremal copies of $T_2'$ in $F^{00}_5$ and $F^{10}_5$ in the previous figure, together with the additional distances created by the selected cross-edges. By repeating the same construction recursively, we get the following equality:
\[
\operatorname{diam}(T_2) = 5n+7 \ \mbox{ for } \ n \geq 6
\]    

The cross-edges selected for $T_1$ and $T_2$ are distinct, and the local trees are pairwise edge-disjoint and internally vertex-disjoint. Thus, for every $n\geq 6$, Algorithm \textsc{F-2CIST} constructs two completely independent spanning trees in $F_n$ with diameters $5n+5$ and $5n+7$, respectively.
\end{proof}

\begin{algorithm}
\caption{\textsc{F-2CIST-Recursive}}
\label{F-2CIST-R}
\KwIn{$n \geq 6$}
\KwOut{Two completely independent spanning trees $T_1$ and $T_2$ of $F_n$.}

Construct the four copies $F^{00}_{n-1}$, $F^{01}_{n-1}$, $F^{10}_{n-1}$, and $F^{11}_{n-1}$ of $F_{n-1}$ \;

\ForEach{$b_1,b_2\in\{0,1\}$}{

    \If{$n=6$}{
        Apply \textsc{F-2CIST-Base} to $F^{b_2b_1}_{5}$ to obtain two local CIST $T^{b_2b_1}_1$ and $T^{b_2b_1}_2$ \;
    }
    
    \Else{
        Apply \textsc{F-2CIST-Recursive} to $F^{b_2b_1}_{n-1}$ to obtain two local CIST $T^{b_2b_1}_1$ and $T^{b_2b_1}_2$ \;
    }
}

\For{$i=1,2$}{
    Select three cross-edges $e_i^1,e_i^2,e_i^3$, where
    $e_i^1$ connects $F^{00}_{n-1}$ to $F^{01}_{n-1}$,
    $e_i^2$ connects $F^{00}_{n-1}$ to $F^{10}_{n-1}$, and
    $e_i^3$ connects $F^{10}_{n-1}$ to $F^{11}_{n-1}$ \;

\For{$\ell=1,2,3$}{

    Let $F^{\alpha}_{n-1}$ and $F^{\beta}_{n-1}$ be the two copies connected by $e_i^\ell$ \;

    Choose $e_i^\ell= (x,y) $, with
    $x\in V(F^{\alpha}_{n-1})$ and $y\in V(F^{\beta}_{n-1})$, among all cross-edges such that $x$ and $y$ are internal vertices of $T^{\alpha}_i$ and $T^{\beta}_i$, respectively, and such that
    \[
    d_{T^{\alpha}_i}(c^{\alpha}_i,x)+1+d_{T^{\beta}_i}(y,c^{\beta}_i)\]
    is minimum\;
}

    Construct
    \[
    T_i =T^{00}_i\cup T^{01}_i\cup T^{10}_i\cup T^{11}_i\cup \{e_i^1,e_i^2,e_i^3\}.
    \]
}

\Return $T_1,T_2$\;
\end{algorithm}

\section{Concluding remarks}
\label{sec5}
In this paper, the problem of existence and construction of two completely independent spanning trees in the dual-cube $F_n$ (for $n \geq 5$) is studied. First, the cluster structure of $F_n$ is exploited to transform the problem into two subproblems: an inter-cluster tree construction handled through the graph $\mathrm{KF}_n$, and local constructions inside the clusters (each of which is isomorphic to a hypercube $Q_{n-1}$) constructed using an ILP program. Then, the algorithm F-2CIST is proposed, which provides an explicit construction starting from the base case $F_5$ and extends it recursively to higher dimensions while selecting adequate cross-edges in order to reduce the diameters of the resulting trees.

Moreover, comparing the hypercube $Q_n$ with the dual-cube $F_{\lfloor \frac{n+1}{2} \rfloor}$ having the same number of vertices $2^n$ when $n \geq 6$, the diameter of the two CIST in $Q_n$ is $2n - 1$ \cite{8}  while the diameter of the two CIST in $F_{\lfloor \frac{n+1}{2} \rfloor}$ is $\frac{5n}{2} + \frac{15}{2}$ (or $\frac{5n}{2}+\frac{19}{2}$), depending on the considered tree, see Theorem~\ref{thm6}). This comparison shows that, although the dual-cube has a smaller degree and fewer edges than the hypercube, the diameters of the CIST are relatively close. We can conclude that the dual-cube provides an efficient structure for CIST construction; it significantly reduces the network density while preserving a comparable diameter to that of the hypercube.

Finally, since each cluster of $F_n$ is isomorphic to $Q_{n-1}$, and since the graph $\mathrm{KF}_n$ connecting the clusters of $F_n$ admits edge-disjoint connecting structures, it is natural to ask whether more CIST in $Q_n$ implies more CIST in $F_{n+1}$. We believe it is the case and propose the following conjecture:

\begin{conjecture}
    If $Q_n$ has $k$ completely independent spanning trees, then $F_{n+1}$ also has $k$ completely independent spanning trees.
\end{conjecture}

\bibliographystyle{abbrvnat}
\bibliography{reference.bib}

\appendix
\appendixpage
\addappheadtotoc

\section{Linear modeling} \label{appB}
Let $G = (V,E)$ be a graph such that $|G| = N$ and $|E| = m$. Let $A = (a_{ij})$ be the adjacency matrix of $G$ such that $a_{ij} = 1$ if the vertex $i$ is adjacent to the vertex $j$, otherwise $a_{ij} = 0$. 
The goal is to find a set of $p$ completely independent spanning trees (if possible) with minimal respective radii. To do that, the following variables are used:

\begin{center}
$ y_{ij} \ \ = \left\lbrace\begin{tabular}{p{1cm} p{10cm}}
1 &  \mbox{if the vertex $j$ is an internal vertex of the tree $i$, $j \in [1,N],$} \\ & {$ i \in [1,p], $}\\
0 & \mbox{otherwise}. \\
\end{tabular}\right.
$
\end{center}

\begin{center}
$ u_{ijk} \ \ = \left\lbrace\begin{tabular}{p{1cm} p{10cm}}
1 &  \mbox{if the edge $(j,k)$ is part of the tree $i$, $j,k \in [1,N],$ $i \in [1,p], $}\\
0 & \mbox{otherwise}. \\
\end{tabular}\right.
$
\end{center}
\begin{center}
$ v_{ij} \ \ = \left\lbrace\begin{tabular}{p{1cm} p{10cm}}
1 &  \mbox{if the vertex $j$ is the center of the tree $i$, $j\in [1,N],$ $i \in [1,p], $}\\
0 & \mbox{otherwise}. \\
\end{tabular}\right.
$
\end{center}

\begin{center}
$ p_{ijk} \ \ = \left\lbrace\begin{tabular}{p{1cm} p{10cm}}
1 &  \mbox{if the vertex $j$ is a parent of the vertex $k$ in the tree $i$,} \\ & {$j,k\in [1,N],$ $i \in [1,p], $}\\
0 & \mbox{otherwise}. \\
\end{tabular}\right.
$
\end{center}

\begin{center}
$ d_{ij} \ = \left\lbrace\begin{tabular}{p{11cm}}
   \mbox{distance from the vertex $j$ to the center of the tree $i$ in the tree $i$}, $j\in [1,N],$ $i \in [1,p], $\\
   $ d_{ij} \in \mathbb{N} \ \ j\in [1,N],$ $i \in [1,p] $.\\
\end{tabular}\right.
$
\end{center}

\begin{center}
$ D \ = \left\lbrace\begin{tabular}{p{11cm}}
   \mbox{maximum radius among all spanning trees $i$}, $i \in [1,p], $\\
   $ D \in \mathbb{N}$.\\
\end{tabular}\right.
$
\end{center}

\noindent The ILP formulation of the problem is: 

Minimize $D$

subject to 
\begin{empheq}[left=\empheqlbrace]{align}
& \sum\limits_{j = 1}^N \sum\limits_{k = 1}^N  u_{ijk} A_{jk} = (N-1) && \forall i = \overline{1,p} \label{eq1} \\
& u_{ijk} \leq A_{jk} && \forall i = \overline{1,p},\ j,k = \overline{1,N} \label{eq2} \\
& \sum\limits_{i = 1}^p u_{ijk} \leq 1 && \forall j,k = \overline{1,N} \label{eq3} \\
& \sum\limits_{k = 1}^N u_{ijk} \geq 1 && \forall i = \overline{1,p} ,\ j = \overline{1,N} \label{eq4}\\
& u_{ijk} = u_{ikj} && \forall i = \overline{1,p},\ j,k = \overline{1,N} \label{eq5} \\
& \sum\limits_{i = 1}^p y_{ij} \leq 1 && \forall j = \overline{1,N} \label{eq6} \\
& y_{ij} \leq 1 && \forall i = \overline{1,p},\ j = \overline{1,N} \label{eq7} \\
& 2 \ y_{ij} \leq \sum\limits_{k = 1}^N u_{ijk} && \forall i = \overline{1,N},\ j = \overline{1,N} \label{eq8} \\
& \sum\limits_{k = 1}^N u_{ijk} \leq (N - 2) \ y_{ij} + 1
&& \forall i = \overline{1,p},\ \  j = \overline{1,N} \label{eq9}\\
& \sum\limits_{j = 1}^N v_{ij} = 1 && \forall i = \overline{1,p} \label{eq10}\\
& p_{ijk} \leq u_{ijk} && \forall i = \overline{1,p},\ j,k = \overline{1,N} \label{eq11}\\
& \sum\limits_{k = 1}^N p_{ikj} = 1 - v_{ij} && \forall i = \overline{1,p},\ j = \overline{1,N} \label{eq12}\\
& d_{ij} \leq M (1 - v_{ij}) && \forall i = \overline{1,p},\  j = \overline{1,N} \label{eq13}\\
& d_{ij} \geq d_{ik} + 1 - M(1 - p_{ikj}) && \forall i = \overline{1,p},\ j,k = \overline{1,N} \label{eq14}\\
& d_{ij} \leq d_{ik} + 1 + M(1 - p_{ikj}) && \forall i = \overline{1,p},\ j,k = \overline{1,N} \label{eq15}\\
& d_{ij} \leq D && \forall i = \overline{1,p},\ j = \overline{1,N} \label{eq16}\\
& y_{ij},u_{ijk},v_{ij},p_{ijk} \in \{0,1\} && \forall i = \overline{1,p},\ j,k = \overline{1,N} \\
& d_{ij},D \in \mathbb{N} && \forall i = \overline{1,p},\ j = \overline{1,N} 
\end{empheq}

Eq. (\ref{eq1}) guarantees that each tree must have $N-1$ edges. Eq. (\ref{eq2}) ensures that for an edge to be used by a tree, it must exist in the graph $G$. Eq. (\ref{eq3}) ensures that each edge can be used by at most one tree. Eq. (\ref{eq4}) ensures that each vertex must be incident to an edge that belongs to a tree, and this for each tree, \textit{i.e.} each tree spans $G$. Eq. (\ref{eq5}) is linked to the fact that the graph $G$ is not oriented. Eq. (\ref{eq6}) ensures that each vertex can be an internal vertex of at most one tree. Eq. (\ref{eq7}) is due to the necessity that each vertex can be internal in one of the trees. Eq. (\ref{eq8}) and (\ref{eq9}) ensure that each internal vertex must have a degree of at least two in the tree it is internal to. Eq. (\ref{eq10}) determines the center of each tree. Eq. (\ref{eq11}) and (\ref{eq12}) establish a parent relation between the vertices of each tree; this is equivalent to the flow method and therefore guarantees the connectivity of each tree. Moreover, the center of each tree is the main parent in that tree. Eq. (\ref{eq13}), (\ref{eq14}), and (\ref{eq15}) compute the distance between each vertex and the center of each tree; the maximal distance is the radius. Eq. (\ref{eq16}) is used to ensure that the desired objective is the best radius possible between all trees.

\section{Local CIST obtained for the clusters of $F_5$} \label{appA}
\begin{center}
\begin{longtable}{|c|c|c|c|}
\hline
$C$ & Generators & Center & Tree $T_1(\mathcal{C})$ \\
\hline
$\overline{0}^0$ & $1$ & $11$ & $(8,9,6,1,6,1,7,15,9,11,11,11,8,12,15,11)$\\
\hline
$\overline{1}^0$ & $2,15$ & $6$ & $(1,3,6,2,6,1,6,3,12,8,8,15,14,15,6,14)$\\
\hline
$\overline{2}^0$ & $1,3$ & $7$ & $(1,3,6,7,0,7,7,7,12,1,14,3,14,12,6,11)$\\
\hline
$\overline{3}^0$ & $2,4$ & $15$ & $(2,3,10,2,12,4,4,5,10,13,14,3,14,12,14,14)$\\
\hline
$\overline{4}^0$ & $3,5$ & $14$ & $(2,3,10,2,12,4,4,5,10,13,14,3,14,12,14,14)$ \\
\hline
$\overline{5}^0$ & $4,6$ & $1$ & $(1,1,4,1,5,1,4,3,10,13,11,3,4,13,6,11)$\\
\hline
$\overline{6}^0$ & $5,7$ & $1$ & $(1,1,0,1,5,1,7,5,0,8,2,15,8,12,12,7)$\\
\hline
$\overline{7}^0$ & $6,8$ & $15$ & $(2,5,6,2,6,13,14,5,12,8,8,15,13,15,15)$\\
\hline
$\overline{8}^0$ & $7,9$ & $3$ & $(1,3,0,3,6,7,7,3,0,1,14,9,14,9,6,13)$\\
\hline
$\overline{9}^0$ & $8,10$ & $10$ & $(8,9,10,7,0,7,4,15,10,8,10,15,4,9,10,14)$\\
\hline
$\overline{10}^0$ & $9,11$ & $10$ & $(2,9,10,1,6,1,2,15,9,11,10,10,4,15,6,11)$\\
\hline
$\overline{11}^0$ & $10,12$ & $13$ & $(1,9,10,11,12,1,4,6,10,9,13,13,6,13)$\\
\hline
$\overline{12}^0$ & $11,13$ & $0$ & $(0,0,3,1,0,1,4,6,12,11,11,3,4,12,6,13)$\\
\hline
$\overline{13}^0$ & $12,14$ & $13$ & $(1,9,3,1,12,4,4,3,9,13,14,15,13,13,12,14)$\\
\hline
$\overline{14}^0$ & $13,15$ & $6$ & $(2,3,6,2,0,13,6,6,0,8,8,15,14,15,6,14)$\\
\hline
$\overline{15}^0$ & $0,14$ & $4$ & $(4,0,0,1,4,4,4,6,10,1,14,9,14,9,6,7)$\\
\hline
$\overline{0}^1$ & $15$ & $10$ & $(8,9,10,1,6,4,14,6,10,8,10,15,4,15,10,14)$\\
\hline
$\overline{1}^1$ & $0,2$ & $5$ & $(1,5,0,11,12,13,2,5,0,1,2,9,13,5,12,11)$\\
\hline
$\overline{2}^1$ & $1,3$ & $7$ & $(1,3,6,7,0,7,7,7,12,1,14,3,14,12,6,11)$\\
\hline
$\overline{3}^1$ & $2,4$ & $15$ & $(2,5,10,2,5,7,7,15,10,11,11,15,4,9,15,15)$\\
\hline
$\overline{4}^1$ & $3,5$ & $14$ & $(2,3,10,2,12,4,4,5,10,13,14,3,14,12,14,14)$\\
\hline
$\overline{5}^1$ & $4,6$ & $1$ & $(1,1,6,1,5,1,4,3,10,13,11,3,4,5,6,11)$\\
\hline
$\overline{6}^1$ & $5,7$ & $1$ & $(1,1,0,1,5,1,7,5,0,8,2,15,8,12,12,7)$\\
\hline
$\overline{7}^1$ & $6,8$ & $15$ & $(2,5,6,2,6,13,14,5,12,8,8,15,13,15,15,15)$\\
\hline
$\overline{8}^1$ & $7,9$ & $3$ & $(1,3,0,3,6,7,7,3,0,1,14,9,14,9,6,13)$\\
\hline
$\overline{9}^1$ & $8,10$ & $10$ & $(8,9,10,7,0,7,4,15,10,8,10,15,4,9,10,14)$\\
\hline
$\overline{10}^1$ & $9,11$ & $10$ & $(2,9,10,1,6,1,2,15,9,11,10,10,4,15,6,11)$\\
\hline
$\overline{11}^1$ & $10,12$ & $13$ & $(1,9,10,11,12,1,4,6,10,13,11,9,13,13,6,13)$\\
\hline
$\overline{12}^1$ & $11,13$ & $0$ & $(0,0,3,1,0,1,4,6,12,11,11,3,4,12,6,13)$\\
\hline
$\overline{13}^1$ & $12,14$ & $14$ & $(8,3,0,7,0,7,14,6,12,13,8,3,14,12,14,14)$\\
\hline
$\overline{14}^1$ & $13,15$ & $9$ & $(1,9,0,2,0,1,4,15,9,9,2,15,14,9,12,13)$\\
\hline
$\overline{15}^1$ & $1,14$ & $7$ & $(1,5,6,7,0,7,7,7,10,1,14,10,13,5,6,13)$\\
\hline
\caption{Local trees $T_1(\mathcal{C})$ in the clusters of $F_5$}
\label{local1}
\end{longtable}
\end{center}

\begin{center}
\begin{longtable}{|c|c|c|c|}
\hline
$\mathcal{C}$ & Generators & Center & Tree $T_2(\mathcal{C})$ \\
\hline
$\overline{0}^0$ & $3$ & $0$ & $(0,0,0,2,0,4,14,5,10,13,2,3,14,5,10,13)$ \\
\hline
$\overline{1}^0$ & $4,13$ & $13$ & $(4,9,0,11,5,13,7,5,8,13,11,9,13,13,10,7)$ \\
\hline
$\overline{2}^0$ & $5,15$ & $9$ & $(8,5,10,2,5,13,4,15,9,9,8,10,4,9,15,13)$\\
\hline
$\overline{3}^0$ & $0,6$ & $8$ & $(1,9,6,7,0,1,7,15,9,11,11,11,8,15,6,11)$\\
\hline
$\overline{4}^0$ & $1,7$ & $11$ & $(1,9,6,7,0,1,7,15,9,11,11,11,8,15,6,11)$ \\
\hline
$\overline{5}^0$ & $2,8$ & $12$ & $(8,9,0,2,0,7,7,15,12,8,14,9,12,12,12,14)$ \\
\hline
$\overline{6}^0$ & $3,9$ & $10$ & $(4,9,6,11,6,13,14,3,10,11,10,11,4,9,10,14)$ \\
\hline
$\overline{7}^0$ & $4,10$ & $3$ & $(1,3,10,3,0,4,7,3,0,11,11,3,4,9,10,7)$ \\
\hline
$\overline{8}^0$ & $5,11$ & $8$ & $(4,5,10,11,12,4,2,15,8,8,8,10,8,12,15,11)$ \\
\hline
$\overline{9}^0$ & $6,12$ & $1$ & $(1,1,3,1,5,1,2,6,12,11,11,3,13,5,6,13)$ \\
\hline
$\overline{10}^0$ & $7,13$ & $13$ & $(8,0,3,7,0,13,7,5,12,13,8,3,13,13,12,14)$ \\
\hline
$\overline{11}^0$ & $8,14$ & $3$ & $(2,3,3,3,0,7,2,3,0,8,14,15,8,5,15,7)$ \\
\hline
$\overline{12}^0$ & $9,15$ & $14$ & $(2,9,10,7,5,7,2,15,10,8,14,15,14,5,14,14)$ \\
\hline
$\overline{13}^0$ & $0,10$ & $2$ & $(2,5,2,11,0,7,2,6,0,11,8,5,8,5,6,7)$ \\
\hline
$\overline{14}^0$ & $1,11$ & $1$ & $(1,1,10,7,5,1,4,5,12,1,11,9,4,12,20,7)$ \\
\hline
$\overline{15}^0$ & $2,12$ & $15$ & $(8,5,3,11,12,13,2,3,12,8,11,15,13,15,15,15)$ \\
\hline
$\overline{0}^1$ & $3,13$ & $7$ & $(2,0,3,7,0,7,2,3,12,11,11,3,13,5,12,7)$ \\
\hline
$\overline{1}^1$ & $4,14$ & $14$ & $(4,3,3,7,6,4,14,6,10,8,14,10,8,15,14,14)$ \\
\hline
$\overline{2}^1$ & $5,15$ & $9$ & $(8,5,10,2,5,13,4,15,9,9,8,10,4,9,15,13)$ \\
\hline
$\overline{3}^1$ & $0,6$ & $8$ & $(8,0,6,1,0,13,14,3,8,1,14,3,8,12,12,13)$ \\
\hline
$\overline{4}^1$ & $1,7$ & $11$ & $(1,9,6,7,0,1,7,15,9,11,11,11,8,15,6,11)$ \\
\hline
$\overline{5}^1$ & $2,8$ & $12$ & $(8,9,0,2,0,7,7,15,12,9,14,9,12,12,12,14)$ \\
\hline
$\overline{6}^1$ & $3,9$ & $10$ & $(4,9,6,7,6,13,14,3,10,11,10,10,4,9,10,14)$ \\
\hline
$\overline{7}^1$ & $4,10$ & $3$ & $(1,3,10,3,0,4,7,3,0,11,11,3,4,9,10,7)$ \\
\hline
$\overline{8}^1$ & $5,11$ & $8$ & $(4,5,10,11,12,4,2,15,8,8,8,10,8,12,15,11)$ \\
\hline
$\overline{9}^1$ & $6,12$ & $1$ & $(1,1,3,1,5,1,2,6,12,11,11,3,13,5,6,13)$ \\
\hline
$\overline{10}^1$ & $7,13$ & $13$ & $(8,0,3,7,0,13,7,5,12,13,8,3,13,13,12,14)$ \\
\hline
$\overline{11}^1$ & $8,14$ & $3$ & $(2,3,3,3,0,7,2,3,0,8,14,15,8,5,15,7)$ \\
\hline
$\overline{12}^1$ & $9,15$ & $14$ & $(2,9,10,7,5,7,2,15,10,8,14,15,14,5,14,14)$ \\
\hline
$\overline{13}^1$ & $1,10$ & $9$ & $(1,9,10,2,5,1,2,15,9,9,11,9,4,15,10,11)$ \\
\hline
$\overline{14}^1$ & $11$ & $14$ & $(8,3,6,7,5,7,14,6,10,11,14,10,8,5,14,14)$ \\
\hline
$\overline{15}^1$ & $2,12$ & $9$ & $(8,3,3,11,12,4,4,15,9,9,2,9,8,9,12,11)$ \\
\hline
\caption{Local trees $T_2(\mathcal{C})$ in the clusters of $F_5$}
\label{local2}
\end{longtable}
\end{center}

\section{Respective representation of $T_1$ and $T_2$ in $G_1$ and $G_2$} \label{appC}
\begin{figure}[h]
    \centering
\begin{tikzpicture}[scale=0.7]
\node[shape=circle,draw=black,scale=0.8] (01) at (0,9) {$0$};
\node[shape=circle,draw=black,scale=0.8] (02) at (0,8) {$1$};
\node[shape=circle,draw=black,scale=0.8] (03) at (0,7) {$2$};
\node[shape=circle,draw=black,scale=0.8] (04) at (0,6) {$3$};
\node[shape=circle,draw=black,scale=0.8] (05) at (0,5) {$4$};
\node[shape=circle,draw=black,scale=0.8] (06) at (0,4) {$5$};
\node[shape=circle,draw=black,scale=0.8] (07) at (0,3) {$6$};
\node[shape=circle,draw=black,scale=0.8] (08) at (0,2) {$7$};
\node[shape=circle,draw=black,scale=0.8] (09) at (0,-2) {$8$};
\node[shape=circle,draw=black,scale=0.7] (010) at (0,-3) {$9$};
\node[shape=circle,draw=black,scale=0.7] (011) at (0,-4) {$10$};
\node[shape=circle,draw=black,scale=0.7] (012) at (0,-5) {$11$};
\node[shape=circle,draw=black,scale=0.7] (013) at (0,-6) {$12$};
\node[shape=circle,draw=black,scale=0.7] (014) at (0,-7) {$13$};
\node[shape=circle,draw=black,scale=0.7] (015) at (0,-8) {$14$};
\node[shape=circle,draw=black,scale=0.7] (016) at (0,-9) {$15$};

\node[shape=rectangle,draw=black,scale=0.8] (101) at (-8.5,0) {$0$};
\node[shape=rectangle,draw=black,scale=0.8] (102) at (-7.5,0) {$1$};
\node[shape=rectangle,draw=black,scale=0.8] (103) at (-6.5,0) {$2$};
\node[shape=rectangle,draw=black,scale=0.8] (104) at (-5.5,0) {$3$};
\node[shape=rectangle,draw=black,scale=0.8] (105) at (-4.5,0) {$4$};
\node[shape=rectangle,draw=black,scale=0.8] (106) at (-3.5,0) {$5$};
\node[shape=rectangle,draw=black,scale=0.8] (107) at (-2.5,0) {$6$};
\node[shape=rectangle,draw=black,scale=0.8] (108) at (-1.5,0) {$7$};
\node[shape=rectangle,draw=black,scale=0.8] (109) at (1.5,0) {$8$};
\node[shape=rectangle,draw=black,scale=0.8] (110) at (2.5,0) {$9$};
\node[shape=rectangle,draw=black,scale=0.8] (111) at (3.5,0) {$10$};
\node[shape=rectangle,draw=black,scale=0.8] (112) at (4.5,0) {$11$};
\node[shape=rectangle,draw=black,scale=0.8] (113) at (5.5,0) {$12$};
\node[shape=rectangle,draw=black,scale=0.8] (114) at (6.5,0) {$13$};
\node[shape=rectangle,draw=black,scale=0.8] (115) at (7.5,0) {$14$};
\node[shape=rectangle,draw=black,scale=0.8] (116) at (8.5,0) {$15$};

    \draw[red] (01) to[out=-120, in= 90] node[pos=0.27, black,scale=0.7] {$5$} (107);
    \draw[red] (01) to[out=-60, in= 90] node[pos=0.6, black,scale=0.7] {$4$} (110);
    \draw[red] (02) to[out=180, in= 90] node[pos=0.6, black,scale=0.7] {$4$} (103);
    \draw[red] (02) to[out=-120, in= 90] node[pos=0.17, black,scale=0.7] {$1$} (107);
    \draw[red] (02) to[out=-60, in= 90] node[pos=0.25, black,scale=0.7] {$5$} (109);
    \draw[red] (02) to[out=0, in= 90] node[pos=0.5, black,scale=0.7] {$4$} (113);
    \draw[red] (02) to[out=0, in= 90] node[pos=0.5, black,scale=0.7] {$3$} (115);
    \draw[red] (02) to[out=0, in= 90] node[pos=0.5, black,scale=0.7] {$5$} (116);
    \draw[red] (03) to[out=180, in= 90] node[pos=0.4, black,scale=0.7] {$5$} (104);
    \draw[red] (03) to[out=-130, in= 90] node[pos=0.23, black,scale=0.7] {$4$} (107);
    \draw[red] (03) to[out=-120, in= 90] node[pos=0.35, black,scale=0.7] {$3$} (108);
    \draw[red] (04) to[out=180, in= 90] node[pos=0.55, black,scale=0.7] {$5$} (103);
    \draw[red] (05) to[out=0, in= 90] node[pos=0.6, black,scale=0.7] {$4$} (115);
    \draw[red] (06) to[out=180, in= 90] node[pos=0.5, black,scale=0.7] {$1$} (102);
    \draw[red] (06) to[out=180, in= 90] node[pos=0.55, black,scale=0.7] {$3$} (106);
    \draw[red] (06) to[out=180, in= 90] node[pos=0.35, black,scale=0.7] {$5$} (107);
    \draw[red] (07) to[out=180, in= 90] node[pos=0.8, black,scale=0.7] {$5$} (103);
    \draw[red] (08) to[out=180, in= 90] node[pos=0.5, black,scale=0.7] {$5$} (107);
    \draw[red] (08) to[out=0, in= 120] node[pos=0.9, black,scale=0.7] {$1$} (114);
    \draw[red] (09) to[out=180, in= -40] node[pos=0.6, black,scale=0.7] {$4$} (101);
    \draw[red] (09) to[out=180, in= -90] node[pos=0.6, black,scale=0.7] {$5$} (107);
    \draw[red] (010) to[out=0, in= -90] node[pos=0.7, black,scale=0.7] {$1$} (111);
    \draw[red] (010) to[out=0, in= -120] node[pos=0.7, black,scale=0.7] {$2$} (115);
    \draw[red] (011) to[out=0, in= -90] node[pos=0.6, black,scale=0.7] {$1$} (111);
    \draw[red] (012) to[out=0, in= -90] node[pos=0.6, black,scale=0.7] {$5$} (113);
    \draw[red] (013) to[out=180, in= -90] node[pos=0.32, black,scale=0.7] {$3$} (105);
    \draw[red] (013) to[out=150, in= -90] node[pos=0.1, black,scale=0.7] {$6$} (107);
    \draw[red] (013) to[out=0, in= -90] node[pos=0.6, black,scale=0.7] {$5$} (112);
    \draw[red] (014) to[out=0, in= -90] node[pos=0.6, black,scale=0.7] {$4$} (115);
    \draw[red] (015) to[out=180, in= -90] node[pos=0.6, black,scale=0.7] {$4$} (103);
    \draw[red] (016) to[out=120, in= -90] node[pos=0.51, black,scale=0.7] {$5$} (107);

\draw[dashed,gray] (01) to[out=-60, in= 90] node[pos=0.4, black,scale=0.7] {$7$} (109);
\draw[dashed,gray] (01) to[out=180, in= 90] node[pos=0.5, black,scale=0.7] {$5$} (102);
\draw[dashed,gray] (01) to[out=0, in= 90] node[pos=0.5, black,scale=0.7] {$4$} (113);
\draw[dashed,gray] (01) to[out=0, in= 90] node[pos=0.5, black,scale=0.7] {$5$} (116);

\draw[dashed,gray] (02) to[out=180, in= 90] node[pos=0.5, black,scale=0.7] {$7$} (102);

\draw[dashed,gray] (03) to[out=180, in= 90] node[pos=0.3, black,scale=0.7] {$4$} (102);
\draw[dashed,gray] (03) to[out=0, in= 90] node[pos=0.5, black,scale=0.7] {$6$} (115);

\draw[dashed,gray] (04) to[out=180, in= 90] node[pos=0.4, black,scale=0.7] {$7$} (105);
\draw[dashed,gray] (04) to[out=180, in= 90] node[pos=0.34, black,scale=0.7] {$4$} (106);

\draw[dashed,gray] (05) to[out=180, in= 90] node[pos=0.57, black,scale=0.7] {$7$} (104);
\draw[dashed,gray] (05) to[out=180, in= 90] node[pos=0.5, black,scale=0.7] {$5$} (105);
\draw[dashed,gray] (05) to[out=180, in= 90] node[pos=0.67, black,scale=0.7] {$6$} (106);
\draw[dashed,gray] (05) to[out=0, in= 90] node[pos=0.8, black,scale=0.7] {$5$} (111);
\draw[dashed,gray] (05) to[out=0, in= 90] node[pos=0.5, black,scale=0.7] {$3$} (113);

\draw[dashed,gray] (06) to[out=180, in= 90] node[pos=0.71, black,scale=0.7] {$4$} (104);
\draw[dashed,gray] (06) to[out=180, in= 90] node[pos=0.6, black,scale=0.7] {$6$} (105);

\draw[dashed,gray] (07) to[out=180, in= 60] node[pos=0.9, black,scale=0.7] {$4$} (101);
\draw[dashed,gray] (07) to[out=180, in= 90] node[pos=0.68, black,scale=0.7] {$5$} (106);
\draw[dashed,gray] (07) to[out=180, in= 90] node[pos=0.35, black,scale=0.7] {$5$} (108);
\draw[dashed,gray] (07) to[out=0, in= 90] node[pos=0.32, black,scale=0.7] {$5$} (109);
\draw[dashed,gray] (07) to[out=0, in= 90] node[pos=0.5, black,scale=0.7] {$7$} (113);
\draw[dashed,gray] (07) to[out=0, in= 120] node[pos=0.7, black,scale=0.7] {$5$} (116);

\draw[dashed,gray] (08) to[out=180, in= 60] node[pos=0.65, black,scale=0.7] {$4$} (103);
\draw[dashed,gray] (08) to[out=0, in= 90] node[pos=0.45, black,scale=0.7] {$5$} (109);
\draw[dashed,gray] (08) to[out=0, in= 140] node[pos=0.7, black,scale=0.7] {$1$} (116);

\draw[dashed,gray] (09) to[out=180, in= -90] node[pos=0.6, black,scale=0.7] {$3$} (108);
\draw[dashed,gray] (09) to[out=0, in= -90] node[pos=0.6, black,scale=0.7] {$4$} (110);
\draw[dashed,gray] (09) to[out=0, in= -120] node[pos=0.66, black,scale=0.7] {$6$} (114);

\draw[dashed,gray] (010) to[out=180, in= -60] node[pos=0.5, black,scale=0.7] {$5$} (101);
\draw[dashed,gray] (010) to[out=0, in= -90] node[pos=0.35, black,scale=0.7] {$4$} (109);
\draw[dashed,gray] (010) to[out=0, in= -90] node[pos=0.6, black,scale=0.7] {$5$} (110);

\draw[dashed,gray] (011) to[out=180, in= -90] node[pos=0.6, black,scale=0.7] {$5$} (105);
\draw[dashed,gray] (011) to[out=0, in= -90] node[pos=0.37, black,scale=0.7] {$3$} (110);
\draw[dashed,gray] (011) to[out=0, in= -90] node[pos=0.62, black,scale=0.7] {$5$} (112);
\draw[dashed,gray] (011) to[out=0, in= -90] node[pos=0.75, black,scale=0.7] {$6$} (116);

\draw[dashed,gray] (012) to[out=180, in= -90] node[pos=0.47, black,scale=0.7] {$6$} (102);
\draw[dashed,gray] (012) to[out=0, in= -90] node[pos=0.48, black,scale=0.7] {$5$} (111);
\draw[dashed,gray] (012) to[out=0, in= -90] node[pos=0.6, black,scale=0.7] {$5$} (112);

\draw[dashed,gray] (013) to[out=180, in= -90] node[pos=0.6, black,scale=0.7] {$6$} (102);
\draw[dashed,gray] (013) to[out=0, in= -90] node[pos=0.5, black,scale=0.7] {$5$} (113);
\draw[dashed,gray] (013) to[out=0, in= -90] node[pos=0.35, black,scale=0.7] {$5$} (114);

\draw[dashed,gray] (014) to[out=-180, in= -90] node[pos=0.3, black,scale=0.7] {$6$} (102);
\draw[dashed,gray] (014) to[out=-180, in= -90] node[pos=0.6, black,scale=0.7] {$5$} (105);
\draw[dashed,gray] (014) to[out=0, in= -90] node[pos=0.6, black,scale=0.7] {$5$} (113);
\draw[dashed,gray] (014) to[out=0, in= -90] node[pos=0.6, black,scale=0.7] {$3$} (114);
\draw[dashed,gray] (014) to[out=0, in= -90] node[pos=0.5, black,scale=0.7] {$6$} (116);

\draw[dashed,gray] (015) to[out=180, in= -90] node[pos=0.6, black,scale=0.7] {$4$} (101);
\draw[dashed,gray] (015) to[out=60, in= -90] node[pos=0.45, black,scale=0.7] {$7$} (109);
\draw[dashed,gray] (015) to[out=0, in= -90] node[pos=0.32, black,scale=0.7] {$4$} (114);
\draw[dashed,gray] (015) to[out=0, in= -90] node[pos=0.6, black,scale=0.7] {$5$} (116);

\draw[dashed,gray] (016) to[out=180, in= -90] node[pos=0.5, black,scale=0.7] {$4$} (101);
\draw[dashed,gray] (016) to[out=120, in= -90] node[pos=0.65, black,scale=0.7] {$3$} (108);
\draw[dashed,gray] (016) to[out=60, in= -90] node[pos=0.6, black,scale=0.7] {$6$} (110);
\draw[dashed,gray] (016) to[out=60, in= -90] node[pos=0.43, black,scale=0.7] {$6$} (111);
\draw[dashed,gray] (016) to[out=0, in= -90] node[pos=0.3, black,scale=0.7] {$5$} (115);
\end{tikzpicture}
    \caption{The weighted graph $G_1$ and the tree $T_1$ (plain red edges) that minimizes the diameter (eccentricity from cluster $\overline{3}^0$ to cluster $\overline{1}^1$ through the center $\overline{6}^1$)}
    \label{g1}
\end{figure}
\begin{figure}[h]
    \centering
\begin{tikzpicture}[scale=0.7]
\node[shape=circle,draw=black,scale=0.8] (01) at (0,9) {$0$};
\node[shape=circle,draw=black,scale=0.8] (02) at (0,8) {$1$};
\node[shape=circle,draw=black,scale=0.8] (03) at (0,7) {$2$};
\node[shape=circle,draw=black,scale=0.8] (04) at (0,6) {$3$};
\node[shape=circle,draw=black,scale=0.8] (05) at (0,5) {$4$};
\node[shape=circle,draw=black,scale=0.8] (06) at (0,4) {$5$};
\node[shape=circle,draw=black,scale=0.8] (07) at (0,3) {$6$};
\node[shape=circle,draw=black,scale=0.8] (08) at (0,2) {$7$};
\node[shape=circle,draw=black,scale=0.8] (09) at (0,-2) {$8$};
\node[shape=circle,draw=black,scale=0.7] (010) at (0,-3) {$9$};
\node[shape=circle,draw=black,scale=0.7] (011) at (0,-4) {$10$};
\node[shape=circle,draw=black,scale=0.7] (012) at (0,-5) {$11$};
\node[shape=circle,draw=black,scale=0.7] (013) at (0,-6) {$12$};
\node[shape=circle,draw=black,scale=0.7] (014) at (0,-7) {$13$};
\node[shape=circle,draw=black,scale=0.7] (015) at (0,-8) {$14$};
\node[shape=circle,draw=black,scale=0.7] (016) at (0,-9) {$15$};

\node[shape=rectangle,draw=black,scale=0.8] (101) at (-8.5,0) {$0$};
\node[shape=rectangle,draw=black,scale=0.8] (102) at (-7.5,0) {$1$};
\node[shape=rectangle,draw=black,scale=0.8] (103) at (-6.5,0) {$2$};
\node[shape=rectangle,draw=black,scale=0.8] (104) at (-5.5,0) {$3$};
\node[shape=rectangle,draw=black,scale=0.8] (105) at (-4.5,0) {$4$};
\node[shape=rectangle,draw=black,scale=0.8] (106) at (-3.5,0) {$5$};
\node[shape=rectangle,draw=black,scale=0.8] (107) at (-2.5,0) {$6$};
\node[shape=rectangle,draw=black,scale=0.8] (108) at (-1.5,0) {$7$};
\node[shape=rectangle,draw=black,scale=0.8] (109) at (1.5,0) {$8$};
\node[shape=rectangle,draw=black,scale=0.8] (110) at (2.5,0) {$9$};
\node[shape=rectangle,draw=black,scale=0.8] (111) at (3.5,0) {$10$};
\node[shape=rectangle,draw=black,scale=0.8] (112) at (4.5,0) {$11$};
\node[shape=rectangle,draw=black,scale=0.8] (113) at (5.5,0) {$12$};
\node[shape=rectangle,draw=black,scale=0.8] (114) at (6.5,0) {$13$};
\node[shape=rectangle,draw=black,scale=0.8] (115) at (7.5,0) {$14$};
\node[shape=rectangle,draw=black,scale=0.8] (116) at (8.5,0) {$15$};

    \draw[blue] (01) to[out=180, in= 90] node[pos=0.5, black,scale=0.7] {$4$} (101);
    \draw[blue] (01) to[out=180, in= 90] node[pos=0.45, black,scale=0.7] {$4$} (104);
    \draw[blue] (02) to[out=180, in= 90] node[pos=0.3, black,scale=0.7] {$5$} (105);
    \draw[blue] (02) to[out=-60, in= 90] node[pos=0.3, black,scale=0.7] {$2$} (110);
    \draw[blue] (02) to[out=0, in= 90] node[pos=0.3, black,scale=0.7] {$2$} (114);
    \draw[blue] (03) to[out=-60, in= 90] node[pos=0.38, black,scale=0.7] {$3$} (110);
    \draw[blue] (04) to[out=180, in= 90] node[pos=0.5, black,scale=0.7] {$3$} (101);
    \draw[blue] (04) to[out=180, in= 90] node[pos=0.43, black,scale=0.7] {$6$} (107);
    \draw[blue] (05) to[out=-60, in= 90] node[pos=0.3, black,scale=0.7] {$5$} (109);
    \draw[blue] (06) to[out=180, in= 90] node[pos=0.41, black,scale=0.7] {$3$} (101);
    \draw[blue] (06) to[out=180, in= 90] node[pos=0.4, black,scale=0.7] {$6$} (103);
    \draw[blue] (06) to[out=-60, in= 90] node[pos=0.4, black,scale=0.7] {$5$} (109);
    \draw[blue] (06) to[out=0, in= 90] node[pos=0.35, black,scale=0.7] {$4$} (110);
    \draw[blue] (06) to[out=0, in= 90] node[pos=0.7, black,scale=0.7] {$4$} (113);
    \draw[blue] (06) to[out=0, in= 90] node[pos=0.43, black,scale=0.7] {$5$} (115);
    \draw[blue] (07) to[out=0, in= 90] node[pos=0.5, black,scale=0.7] {$3$} (115);
    \draw[blue] (08) to[out=180, in= 40] node[pos=0.67, black,scale=0.7] {$3$} (101);
    \draw[blue] (08) to[out=180, in= 90] node[pos=0.4, black,scale=0.7] {$3$} (108);
    \draw[blue] (08) to[out=0, in= 90] node[pos=0.47, black,scale=0.7] {$3$} (112);
    \draw[blue] (09) to[out=180, in= -90] node[pos=0.6, black,scale=0.7] {$5$} (106);
    \draw[blue] (09) to[out=0, in= -90] node[pos=0.4, black,scale=0.7] {$1$} (109);
    \draw[blue] (09) to[out=0, in= -140] node[pos=0.82, black,scale=0.7] {$5$} (116);
    \draw[blue] (010) to[out=180, in= -90] node[pos=0.5, black,scale=0.7] {$2$} (103);
    \draw[blue] (011) to[out=0, in= -90] node[pos=0.5, black,scale=0.7] {$3$} (113);
    \draw[blue] (012) to[out=180, in= -90] node[pos=0.37, black,scale=0.7] {$5$} (101);
    \draw[blue] (013) to[out=60, in= -90] node[pos=0.4, black,scale=0.7] {$4$} (109);
    \draw[blue] (014) to[out=180, in= -90] node[pos=0.3, black,scale=0.7] {$4$} (101);
    \draw[blue] (014) to[out=0, in= -90] node[pos=0.43, black,scale=0.7] {$2$} (111);
    \draw[blue] (015) to[out=180, in= -90] node[pos=0.25, black,scale=0.7] {$1$} (102);    
    \draw[blue] (015) to[out=0, in= -90] node[pos=0.57, black,scale=0.7] {$4$} (113);
    \draw[blue] (016) to[out=0, in= -90] node[pos=0.4, black,scale=0.7] {$4$} (113);

\draw[gray,dashed] (01) to[out=180, in= 90] node[pos=0.4, black,scale=0.7] {$5$} (105);
\draw[gray,dashed] (01) to[out=-150, in= 90] node[pos=0.08, black,scale=0.7] {$5$} (106);
\draw[gray,dashed] (01) to[out=-30, in= 90] node[pos=0.4, black,scale=0.7] {$6$} (111);

\draw[gray,dashed] (02) to[out=-120, in= 90] node[pos=0.16, black,scale=0.7] {$4$} (108);

\draw[gray,dashed] (03) to[out=180, in= 90] node[pos=0.8, black,scale=0.7] {$7$} (103);
\draw[gray,dashed] (03) to[out=180, in= 90] node[pos=0.24, black,scale=0.7] {$6$} (106);
\draw[gray,dashed] (03) to[out=-60, in= 90] node[pos=0.37, black,scale=0.7] {$4$} (109);
\draw[gray,dashed] (03) to[out=0, in= 90] node[pos=0.4, black,scale=0.7] {$5$} (114);
\draw[gray,dashed] (03) to[out=0, in= 90] node[pos=0.5, black,scale=0.7] {$6$} (116);

\draw[gray,dashed] (04) to[out=180, in= 90] node[pos=0.4, black,scale=0.7] {$6$} (102);
\draw[gray,dashed] (04) to[out=180, in= 90] node[pos=0.4, black,scale=0.7] {$7$} (104);
\draw[gray,dashed] (04) to[out=0, in= 90] node[pos=0.36, black,scale=0.7] {$6$} (115);

\draw[gray,dashed] (05) to[out=180, in= 90] node[pos=0.44, black,scale=0.7] {$5$} (102);
\draw[gray,dashed] (05) to[out=180, in= 90] node[pos=0.3, black,scale=0.7] {$7$} (107);
\draw[gray,dashed] (05) to[out=-120, in= 90] node[pos=0.28, black,scale=0.7] {$6$} (108);
\draw[gray,dashed] (05) to[out=0, in= 90] node[pos=0.36, black,scale=0.7] {$5$} (116);

\draw[gray,dashed] (07) to[out=180, in= 90] node[pos=0.53, black,scale=0.7] {$6$} (104);
\draw[gray,dashed] (07) to[out=180, in= 90] node[pos=0.45, black,scale=0.7] {$7$} (105);
\draw[gray,dashed] (07) to[out=180, in= 90] node[pos=0.43, black,scale=0.7] {$5$} (107);
\draw[gray,dashed] (07) to[out=0, in= 90] node[pos=0.44, black,scale=0.7] {$6$} (110);

\draw[gray,dashed] (08) to[out=180, in= 40] node[pos=0.8, black,scale=0.7] {$4$} (102);
\draw[gray,dashed] (08) to[out=180, in= 90] node[pos=0.7, black,scale=0.7] {$6$} (105);
\draw[gray,dashed] (08) to[out=0, in= 90] node[pos=0.6, black,scale=0.7] {$5$} (111);

\draw[gray,dashed] (09) to[out=180, in= -60] node[pos=0.7, black,scale=0.7] {$4$} (103);
\draw[gray,dashed] (09) to[out=180, in= -90] node[pos=0.7, black,scale=0.7] {$5$} (105);
\draw[gray,dashed] (09) to[out=0, in= -90] node[pos=0.7, black,scale=0.7] {$4$} (111);
\draw[gray,dashed] (09) to[out=0, in= -90] node[pos=0.7, black,scale=0.7] {$6$} (112);
\draw[gray,dashed] (09) to[out=0, in= -90] node[pos=0.73, black,scale=0.7] {$4$} (113);

\draw[gray,dashed] (010) to[out=180, in= -90] node[pos=0.42, black,scale=0.7] {$4$} (106);
\draw[gray,dashed] (010) to[out=180, in= -90] node[pos=0.4, black,scale=0.7] {$6$} (107);
\draw[gray,dashed] (010) to[out=0, in= -90] node[pos=0.51, black,scale=0.7] {$7$} (113);
\draw[gray,dashed] (010) to[out=0, in= -90] node[pos=0.4, black,scale=0.7] {$3$} (114);

\draw[gray,dashed] (011) to[out=120, in= -90] node[pos=0.36, black,scale=0.7] {$5$} (108);
\draw[gray,dashed] (011) to[out=60, in= -90] node[pos=0.4, black,scale=0.7] {$4$} (109);
\draw[gray,dashed] (011) to[out=0, in= -90] node[pos=0.4, black,scale=0.7] {$3$} (114);
\draw[gray,dashed] (011) to[out=0, in= -90] node[pos=0.4, black,scale=0.7] {$3$} (115);

\draw[gray,dashed] (012) to[out=120, in= -90] node[pos=0.27, black,scale=0.7] {$3$} (108);
\draw[gray,dashed] (012) to[out=60, in= -90] node[pos=0.4, black,scale=0.7] {$6$} (109);
\draw[gray,dashed] (012) to[out=0, in= -90] node[pos=0.4, black,scale=0.7] {$6$} (115);
\draw[gray,dashed] (012) to[out=0, in= -90] node[pos=0.4, black,scale=0.7] {$4$} (116);

\draw[gray,dashed] (013) to[out=150, in= -90] node[pos=0.4, black,scale=0.7] {$4$} (106);
\draw[gray,dashed] (013) to[out=0, in= -90] node[pos=0.4, black,scale=0.7] {$7$} (110);
\draw[gray,dashed] (013) to[out=0, in= -90] node[pos=0.4, black,scale=0.7] {$3$} (111);
\draw[gray,dashed] (013) to[out=0, in= -90] node[pos=0.4, black,scale=0.7] {$4$} (116);

\draw[gray,dashed] (014) to[out=180, in= -90] node[pos=0.4, black,scale=0.7] {$2$} (103);
\draw[gray,dashed] (014) to[out=150, in= -90] node[pos=0.43, black,scale=0.7] {$5$} (106);

\draw[gray,dashed] (015) to[out=150, in= -90] node[pos=0.4, black,scale=0.7] {$3$} (106);
\draw[gray,dashed] (015) to[out=0, in= -90] node[pos=0.4, black,scale=0.7] {$6$} (111);
\draw[gray,dashed] (015) to[out=0, in= -90] node[pos=0.4, black,scale=0.7] {$6$} (112);

\draw[gray,dashed] (016) to[out=180, in= -90] node[pos=0.3, black,scale=0.7] {$6$} (103);
\draw[gray,dashed] (016) to[out=150, in= -90] node[pos=0.43, black,scale=0.7] {$5$} (106);
\draw[gray,dashed] (016) to[out=60, in= -90] node[pos=0.4, black,scale=0.7] {$7$} (109);
\draw[gray,dashed] (016) to[out=0, in= -90] node[pos=0.4, black,scale=0.7] {$4$} (112);
\draw[gray,dashed] (016) to[out=0, in= -90] node[pos=0.4, black,scale=0.7] {$4$} (114);
\draw[gray,dashed] (016) to[out=0, in= -90] node[pos=0.5, black,scale=0.7] {$3$} (116);
\end{tikzpicture}
    \caption{The weighted graph $G_2$ and the tree $T_2$ (plain blue edges) that minimizes the diameter (eccentricity from cluster $\overline{4}^1$ to cluster $\overline{6}^1$ through the center $\overline{5}^0$)}
    \label{g2}
\end{figure}
 
\end{document}